\documentclass[a4paper,10pt]{article}
\usepackage{pgf,tikz}
\usetikzlibrary{arrows}
\usepackage{geometry}
\usepackage{latexsym}
\usepackage{amsfonts, amsmath, amssymb, amsthm}
\usepackage[utf8]{inputenc}
\usepackage[T1]{fontenc}
\usepackage[english]{babel}
\usepackage{stmaryrd}
\usepackage{subfig}
\usepackage{frcursive}
\usepackage{mathrsfs}
\usepackage{enumitem}

\newcommand{\N}{\mathbb{N}}
\newcommand{\Z}{\mathbb{Z}}

\newcommand{\R}{\mathbb{R}}
\newcommand{\C}{\mathbb{C}}

\newcommand{\E}{\mathbb{E}}

\newcommand{\hA}{\hat{A}}
\newcommand{\bin}[1]{\underline{#1}_{_2}}
\newcommand{\maineq}{s_2(n+a)-s_2(n)=d}
\newcommand{\F}[3]{\mathcal{F}_{(\alpha^{#1},\beta^{#2})}^{({#3})}}
\newcommand{\D}[2]{\mathcal{D}_{#1}(#2)}
\newcommand{\Ps}{\mathcal{P}}

\newtheorem{theorem}{Theorem}[section]
\newtheorem{lemma}[theorem]{Lemma}
\newtheorem{proposition}[theorem]{Proposition}

\theoremstyle{definition}
\newtheorem{remark}[theorem]{Remark}
\newtheorem{definition}[theorem]{Definition}
\newtheorem{example}[theorem]{Example}

\title{Central Limit Theorem for probability measures defined by sum-of-digits function in base 2}
\author{Jordan Emme and Pascal Hubert}
\date{}
\begin{document}

\maketitle

\begin{abstract}
In this paper we prove a central limit theorem for some probability measures defined as asymptotic densities of integer sets defined via sum-of-digit-function. To any non-negative integer $a$ we can associate a measure on $\mathbb{Z}$ called $\mu_a$ such that, for any $d$, $\mu_a(d)$ is the asymptotic density of the set of non-negative integers $n$ such that $s_2(n+a)-s_2(n)=d$ where $s_2(n)$ is the number of digits ``1'' in the binary expansion of $n$. We express this probability measure as a product of matrices whose coefficients are operators of $l^1(\mathbb{Z}$). Then we take a sequence of integers $(a_X(n))_{n\in\mathbb{N}}$ defined via a balanced Bernoulli sequence $X$. We prove that, for almost every sequence, and after renormalization by the typical variance, we have a central limit theorem by computing all the moments and proving that they converge towards the moments of the normal law $\mathcal{N}(0,1)$.
\end{abstract}

\section{Introduction}

\subsection{Background}

In this paper we study some properties of sets defined via sum-of-digit function in base 2. Namely, for a given integer $a$, we study the asymptotic density of set of integers such that the difference of $1$ in their binary expansion before and after addition with $a$ is a given integer. More precisely, we define:

$$
\forall n \in \N,\ s_2(n)=\sum_{k=0}^m n_k
$$
where
$$
n=\sum_{k=0}^m n_k 2^k,
$$
and
$$
\forall a \in \N,\ \forall d \in \Z,\ \mu_a(d)=\lim_{N\rightarrow +\infty}\frac{1}{N}\#\{n<N\ |\ s_2(n+a)-s_2(n)=d \}.
$$

This can be linked with correlation functions that are studied in \cite{besineau} for instance, or with properties of sum-of-digit functions which have been extensively studied in, for instance \cite{delange_somme} or, more recently, in \cite{MauduitRivat1}. We can also quote \cite{Keane} for the links between Thue-Morse sequence and the sum-of-digits function in base 2. The type of questions answered in this article also share some similarity with \cite{kopparty} and \cite{kopparty_student}.

More precisely, we are interested in normality properties of such sets and we give in this paper a central limit theorem for a random $a$. This kind of properties have raised a considerable interest in number theory. We can quote \cite{delange_qadd,drmota_joint,kim} for some of these normality properties for $q$-additive functions.

In \cite{petitlapin}, we were interested in those densities of sets and more precisely in their asymptotic properties as $a$ goes to infinity. The methods for computing those densities were essentially combinatorial. In this paper, we are closer to dynamical systems as we study a random product of matrices.

\subsection{Results}
\begin{definition}
 Let $n \in \N$. There exists a unique smallest $m \in \N$ and a unique sequence $\{{n_0},...,{n_m}\} \in \{0,1\}^m$ such that:
 $$
 n=\sum_{k=0}^{m} n_k \cdot 2^k.
 $$
 Denote $\bin{n}=n_m...{n_0}.$ the word $\bin(n)$ is an element of the free monoid $\{0,1\}^*$.
\end{definition}

\begin{definition}
 Define the sum-of-digits function in base 2 $s_2$, as:
$$ 
\begin{array}{cccc}
  s_2:&\N   &\rightarrow  &\N \\
      & n   &\mapsto      &\displaystyle{\sum_{k=0}^m n_k}
 \end{array}
$$
where $\bin{n}=n_m...n_0$.
\end{definition}

We are interested in the following equation with parameters $a \in \N$, $d\in \Z$ and unknown $n \in \N$:
$$\maineq .$$

More precisely, we wish to understand the asymptotic densities of the following sets:
$$
\mathcal{E}_{a,d}:=\left\{n\in\N\ |\ \maineq  \right\}.
$$

In \cite{petitlapin}, we prove the following:

\begin{proposition}\label{p.solution}
 For any $a \in \N$, $d\in \Z$, there exists a finite set of words $\Ps_{a,d}:=\{w_1,...,w_k\} \subset \{0,1\}^*$ such that:
 $$
 \mathcal{E}_{a,d}=\bigcup_{i \in \{1,...,k\}} [w_i]
 $$
 where $[w]$ is the set of integers $n$ such that $\bin{n}$ ends with $w$.
\end{proposition}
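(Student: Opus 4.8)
The plan is to understand the carry structure of binary addition and show that membership in $\mathcal{E}_{a,d}$ is determined by a finite suffix of $\bin{n}$. The key observation is that when we add $a$ to $n$, the quantity $s_2(n+a)-s_2(n)$ is governed entirely by the pattern of carries propagating through the addition. Writing $\bin{a}=a_m\dots a_0$, the carry $c_k$ into position $k$ satisfies the recurrence $c_{k+1}=\lfloor (n_k+a_k+c_k)/2\rfloor$ with $c_0=0$, and each digit of the sum is $(n_k+a_k+c_k)\bmod 2$. A standard bookkeeping identity then yields
$$
s_2(n+a)=s_2(n)+s_2(a)-2\cdot\#\{k : \text{a carry is generated at position }k\},
$$
or more precisely $s_2(n+a)-s_2(n)$ can be expressed as a sum over positions of local contributions depending only on $(n_k,a_k,c_k)$. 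The first step I would carry out is to make this carry-counting identity precise, so that $d$ is expressed as a function of the carry sequence.

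Next I would encode the computation as a finite automaton. The carry $c_k\in\{0,1\}$ is a single bit of state, and since $a$ is fixed with $a_k=0$ for all $k>m$, the transition reading digit $n_k$ depends only on the current carry and on $a_k$. Beyond position $m$ all the $a_k$ vanish, so once we are past the top of $a$ the automaton simply propagates any remaining carry: if the carry is $1$ it continues until it meets a $0$ digit of $n$ (producing a bounded, predictable change), and if the carry is $0$ the higher digits of $n$ contribute nothing to the difference $s_2(n+a)-s_2(n)$. The crucial consequence is that the value of $d$ stabilizes after finitely many digits: there is a bound $M$ (depending on $a$) such that $d$ is completely determined by the low-order digits $n_0,\dots,n_{M-1}$ of $n$, irrespective of the higher digits.

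Given this stabilization, I would finish as follows. For each fixed finite prefix of low-order digits, i.e.\ each word $w\in\{0,1\}^{M}$ read from the least significant end, the automaton computes a definite value $d(w)$, and by the stabilization argument every integer $n$ whose binary expansion ends in $w$ lies in $\mathcal{E}_{a,d(w)}$ for that single value $d(w)$, independent of the higher digits. Hence
$$
\mathcal{E}_{a,d}=\bigcup_{\substack{w\in\{0,1\}^{M}\\ d(w)=d}}[w],
$$
which is a finite union of cylinder sets $[w]$, exhibiting the required finite set $\Ps_{a,d}=\{w : d(w)=d\}$. One should also check the edge cases where the relevant carry has not yet terminated by position $M$, but since a surviving carry forces specific higher digits, these are absorbed into finitely many longer words $w$.

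The main obstacle I anticipate is handling the propagation of a carry past the most significant digit of $a$ cleanly: a carry of $1$ entering position $m+1$ can travel through an arbitrarily long run of $1$'s in $n$ before being absorbed, and each such step flips a digit and alters the digit sum. The work is to verify that this propagation nonetheless changes $s_2(n+a)-s_2(n)$ by an amount that is itself determined by the suffix of $n$ (the length of the run of $1$'s immediately above position $m$), so that $d$ still depends only on a bounded number of low-order digits. Making the choice of $M$ uniform and confirming that no genuinely unbounded dependence on high-order digits leaks into $d$ is the delicate point; everything else is routine automaton bookkeeping.
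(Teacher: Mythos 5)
Your overall strategy---tracking the carry with a finite automaton and showing that membership in $\mathcal{E}_{a,d}$ is decided by a suffix of $\bin{n}$---is sound and can be completed (note that the paper itself does not prove Proposition~\ref{p.solution}; it imports it from \cite{petitlapin}, and the remark opening Section~\ref{s.measures} records the digit-recursion for the sets $\Ps_{a,d}$ on which that proof is based, a different route from yours). However, as written your argument contains a genuine gap: the ``crucial consequence'' you assert---that there is a bound $M$ depending only on $a$ such that $d=s_2(n+a)-s_2(n)$ is completely determined by $n_0,\dots,n_{M-1}$, irrespective of the higher digits---is false. Take $a=1$: if $\bin{n}$ ends with exactly $k$ ones, then $s_2(n+1)-s_2(n)=1-k$, so the value of $d$ depends on the length of the trailing run of $1$'s, which is unbounded; no $M=M(a)$ can exist, and the same happens for every $a\geq 1$ by choosing the low digits of $n$ all equal to $1$. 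Consequently the function $d(w)$ on $\{0,1\}^M$ in your third paragraph is not well defined (for $w=1^M$ it has no value), and your displayed identity $\mathcal{E}_{a,d}=\bigcup_{d(w)=d}[w]$ fails for every fixed $M$ once $d$ is sufficiently negative. The ``delicate point'' you defer---``making the choice of $M$ uniform'' and ruling out ``genuinely unbounded dependence on high-order digits''---is not delicate but impossible: that dependence is genuinely unbounded. (A smaller slip: in base $2$ the bookkeeping identity is $s_2(n+a)=s_2(n)+s_2(a)-\#\{\text{carries}\}$, without the factor $2$.)

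The repair is to let the lengths of the words depend on $d$ as well as on $a$, which is exactly what the statement permits, since $\Ps_{a,d}$ is indexed by both. Concretely, let $m$ be the position of the top bit of $a$. For each $w\in\{0,1\}^{m+1}$ the positions $0,\dots,m$ of $n+a$ depend only on $w$ and $a$, giving a partial difference $d_0(w)$ and a carry-out $c(w)\in\{0,1\}$. If $c(w)=0$, every $n\in[w]$ satisfies $d=d_0(w)$, so put $w$ into $\Ps_{a,d}$ exactly when $d_0(w)=d$. If $c(w)=1$ and $n\in[w]$ has exactly $j\geq 0$ ones above position $m$ before its first zero, then $d=d_0(w)+1-j$; for the fixed target $d$ this pins down $j=d_0(w)+1-d$, so put the single longer word $0\,1^{j}\,w$ into $\Ps_{a,d}$ (and nothing if that $j$ is negative). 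This produces a finite set $\Ps_{a,d}$ whose word lengths grow roughly like $s_2(a)-d$ as $d\to-\infty$, which is precisely why no uniform $M(a)$ can exist. Your parenthetical remark that surviving carries are ``absorbed into finitely many longer words'' is in fact this argument; it must replace, not supplement, the uniform-$M$ claim, and once it does, the proof goes through.
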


\begin{remark}
 Remark that  $\Ps_{a,d}$ is finite and, possibly, empty (whenever $d>s_2(a)$ actually). 
\end{remark}

From Proposition \ref{p.solution}, it is clear that the densities of the sets $\mathcal{E}_{a,d}$ exist. This was known since \cite{besineau}. In this way we define the main object of our study:

\begin{definition}
 Let us define, for any $a \in \N$, the probability measure $\mu_a$ by:
 $$
 \forall d \in \Z, \  \mu_a(d):= \lim_{N \rightarrow + \infty} \frac{\#\left\{n \leq N \ | \ \maineq \right\}}{N}.
 $$
\end{definition}

\begin{remark}
Remark that in order to get a probability measure, we cannot exchange the roles of $d$ and $a$. Indeed, one can check that for any $d$, the sequence $\left(\mu_a(d) \right)_{a \in \N}$ is not even in $l^1(\N)$.
\end{remark}

We are interested in asymtotic properties of the measures $\mu_a$ as $a$ goes to infinity in a certain sense. Namely, we define the following quantity:

\begin{definition}
 Let $a \in \N$. define the quantity:
 $$
 l(a):=\#\left\{ \text{occurences of ``01'' in }\bin{a} \right\}
 $$
\end{definition}

We recall two theorems from \cite{petitlapin}:

\begin{theorem}
 There exists a constant $C>0$ such that, for any $a \in \N$:
 $$
 \|\mu_a\|_2\leq C\cdot l(a)^{-1/4}.
 $$
\end{theorem}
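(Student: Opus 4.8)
The plan is to reinterpret $\mu_a$ as the law of the number of carries produced when adding $a$ to a ``random'' integer, and then to bound the resulting $l^2$ norm by a Fourier / transfer-matrix computation. \emph{Step 1 (reduction to a carry count).} By Kummer's theorem in base $2$, the number of carries $c(n,a)$ in the addition $n+a$ satisfies $s_2(n)+s_2(a)-s_2(n+a)=c(n,a)$, so that
$$
s_2(n+a)-s_2(n)=s_2(a)-c(n,a).
$$
Hence the event $\{\maineq\}$ is exactly $\{c(n,a)=s_2(a)-d\}$, and by Proposition \ref{p.solution} each such set is a finite union of cylinders $[w_i]$, whose density is $\sum_i 2^{-|w_i|}$, i.e. the measure of these cylinders under the Bernoulli$(1/2)$ law on the bits of $n$. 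Thus $\mu_a$ is, up to the reflection $d\mapsto s_2(a)-d$ (which preserves $\|\cdot\|_2$), the distribution of $c=c(n,a)$ when the bits $(n_k)$ are i.i.d. uniform, and it suffices to bound the $l^2$ norm of this carry distribution.

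\emph{Step 2 (Parseval and transfer matrices).} Write $\phi_a(\theta):=\E[e^{2\pi i\theta c}]$. Since $\mu_a$ is finitely supported, Parseval on $\Z$ gives
$$
\|\mu_a\|_2^2=\int_0^1|\phi_a(\theta)|^2\,d\theta .
$$
The carries $(\gamma_k)_k$, defined by $\gamma_{-1}=0$ and $\gamma_k=\mathrm{maj}(a_k,n_k,\gamma_{k-1})$, form an inhomogeneous Markov chain driven by the bits of $a$, with $c=\#\{k:\gamma_k=1\}$. Setting $z=e^{2\pi i\theta}$, this expresses $\phi_a$ as a product of $2\times2$ transfer matrices
$$
M^{(0)}(\theta)=\begin{pmatrix}1&0\\[2pt]\tfrac12&\tfrac12 z\end{pmatrix},\qquad M^{(1)}(\theta)=\begin{pmatrix}\tfrac12&\tfrac12 z\\[2pt]0& z\end{pmatrix},
$$
namely $\phi_a(\theta)=(1,0)\,\bigl(\prod_{k=0}^{m}M^{(a_k)}(\theta)\bigr)\,(1,1)^{\mathsf T}$, where $z$ records each position carrying a $1$.

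\emph{Step 3 (the key contraction estimate).} Group the product according to the maximal blocks of $1$'s in $\bin{a}$; each such block sits below a $0$ and so accounts for exactly one occurrence of ``$01$'', whence there are $l(a)$ of them up to a boundary term. At $\theta=0$ both matrices are row-stochastic with common Perron eigenvector $(1,1)^{\mathsf T}$ and eigenvalue $1$, whereas for $\theta\notin\Z$ the phases $z$ force cancellation. The heart of the argument is to show that the matrix attached to one block contracts a suitable norm (or invariant cone) by a factor $\rho(\theta)<1$, with moreover $\rho(\theta)=1-c_0\theta^2+O(\theta^3)$ near $\theta=0$, the non-degeneracy $c_0>0$ reflecting that each block contributes $\Theta(1)$ to the variance of $c$. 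Chaining the $l(a)$ blocks then yields $|\phi_a(\theta)|\le C\,\rho(\theta)^{\,l(a)}$. This step is the main obstacle: the transfer matrices do not commute and the spectral gap driving the contraction degenerates as $\theta\to0$, so one must produce an estimate that is at once uniform enough in $\theta$ to be integrated, sharp enough near $0$ to capture the quadratic rate, and uniform over all $a$ — essentially a quantitative quasi-compactness statement for the $\theta$-twisted transfer operator.

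\emph{Step 4 (conclusion).} Inserting the bound into Parseval,
$$
\|\mu_a\|_2^2\le C^2\int_0^1\rho(\theta)^{2l(a)}\,d\theta,
$$
and since $\rho$ is bounded away from $1$ off a neighbourhood of $0$ while $\rho(\theta)\approx e^{-c_0\theta^2}$ there, the standard Laplace estimate gives $\int_0^1\rho^{2l(a)}\,d\theta=O(l(a)^{-1/2})$. Therefore $\|\mu_a\|_2^2=O(l(a)^{-1/2})$, i.e. $\|\mu_a\|_2\le C\,l(a)^{-1/4}$, as claimed.
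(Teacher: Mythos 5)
Your strategy --- reinterpret $\mu_a$ as the law of the carry count via Kummer's identity, pass to the characteristic function, and integrate a pointwise bound via Parseval --- is the natural analytic route, and for what it is worth it differs from the source of this statement: the theorem is not proved in the present paper but recalled from \cite{petitlapin}, where the methods are combinatorial. Steps 1, 2 and 4 of your proposal are essentially correct, up to two small repairs: $\mu_a$ is not finitely supported (e.g.\ $\mu_1$ charges every $d\le 1$), though Parseval still applies because $l^1(\Z)\subset l^2(\Z)$; and the right-hand vector of your matrix product must be $\bigl(1,\tfrac{1}{2-z}\bigr)^{\mathsf T}$ rather than $(1,1)^{\mathsf T}$, because carries keep propagating past the top bit of $a$ (compare the vector $\bigl(1,\frac{e^{i\theta}}{2-e^{-i\theta}}\bigr)^{\mathsf T}$ appearing in Subsection \ref{s.characteristic}).

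The genuine gap is Step 3, and it is not a technical loose end: it is the entire theorem. You assert, but do not prove, that the matrix attached to each ``01'' block contracts some norm by a factor $\rho(\theta)\le 1-c_0\theta^2$, uniformly in $a$ and in the block lengths, with $\rho$ bounded away from $1$ for $\theta$ away from $0$. Nothing in Steps 1--2 makes this claim easier than the original statement. Indeed, for \emph{every} $\theta$ both $M^{(0)}(\theta)$ and $M^{(1)}(\theta)$ have operator norm exactly $1$ in the norm induced by $\|\cdot\|_\infty$ (each has a row of modulus sum $1$, namely $(1,0)$ for $M^{(0)}$ and $(0,z)$ for $M^{(1)}$), and so do all pure powers $(M^{(0)})^k$ and $(M^{(1)})^k$; the two matrices share the Perron vector $(1,1)^{\mathsf T}$ only at $\theta=0$. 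Hence the contraction you need cannot come from any single matrix or run; it must be extracted from the non-commuting interplay at the $0\!\to\!1$ transitions, quantified uniformly over arbitrary run lengths of $0$'s and $1$'s separating those transitions, and with exactly quadratic degeneration as $\theta\to0$ --- any weaker rate near $0$ destroys the $l(a)^{-1/2}$ Laplace estimate, and any loss of uniformity in $a$ destroys the constant $C$. Writing $|\phi_a(\theta)|\le C\rho(\theta)^{l(a)}$ is therefore a Fourier-side restatement of what is to be proven: a quantitative quasi-compactness lemma that you would still have to formulate and establish (for instance by constructing an adapted norm or invariant cone for one block matrix $M^{(0)}(M^{(1)})^{j}$ and checking uniformity in $j\ge1$ and in $\theta$). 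As it stands, the proposal is a correct reduction plus an unproven core, so it does not constitute a proof.
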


\begin{theorem}
 For any $a \in \N$, the probability measure $\mu_a$ has mean 0 and its variance is bounded by:
 $$
 l(a)-1\leq \text{Var}(\mu_a)\leq 4l(a)+2.
 $$
\end{theorem}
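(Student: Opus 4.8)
The plan is to establish the two assertions separately: the vanishing of the mean follows from a telescoping estimate, while the variance bounds come from analysing the carry structure of the addition $n\mapsto n+a$. For the mean, observe that $\sum_{d}d\,\mu_a(d)=\lim_{N\to\infty}\frac1N\sum_{n<N}\big(s_2(n+a)-s_2(n)\big)$. Setting $S(N)=\sum_{n<N}s_2(n)$, the inner sum equals $\big(S(N+a)-S(N)\big)-S(a)$. The bracket is a sum of exactly $a$ terms, each at most $\log_2(N+a)+1$, hence $O(a\log N)$, while $S(a)$ is a constant depending only on $a$; dividing by $N$ and letting $N\to\infty$ gives mean $0$.

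For the variance I would first replace $s_2(n+a)-s_2(n)$ by a carry count. By the classical carry identity (Kummer's theorem in base $2$), $s_2(n+a)-s_2(n)=s_2(a)-c(n,a)$, where $c(n,a)$ is the number of carries produced when adding $a$ to $n$ in base $2$; since the mean is $0$, this gives $\mathrm{Var}(\mu_a)=\mathrm{Var}(c(\cdot,a))$. Because $a$ is fixed, $c(n,a)$ depends almost surely on only finitely many binary digits of $n$ (a carry escaping above the top of $\bin{a}$ dies after a geometric number of further steps), so equidistribution of $n$ modulo $2^{K}$ identifies $\mu_a$ with the law of $s_2(a)-c(n,a)$ when the digits of $n$ are independent fair coin flips. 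In this model the carry, read from the least significant bit upwards, is a Markov chain on $\{0,1\}$: at a digit where $a_k=1$ a carry is created with probability $\tfrac12$ when none is present, and at a digit where $a_k=0$ an incoming carry survives with probability $\tfrac12$. Thus carries are generated inside the maximal blocks of $1$'s of $a$ and extinguished in the zero-gaps separating them.

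Writing $c(n,a)=\sum_k Y_k$ as a sum of carry-out indicators, I would expand $\mathrm{Var}=\sum_k\mathrm{Var}(Y_k)+2\sum_{k<l}\mathrm{Cov}(Y_k,Y_l)$ and organise the terms according to the $l(a)+1$ maximal blocks of $1$'s of $a$; recall that $l(a)$, the number of occurrences of ``01'', is precisely the number of internal zero-gaps separating these blocks. A direct computation on a single block fixes the scale: for $a$ a lone $1$ one finds $c(n,a)$ equal to the number of trailing $1$'s of $n$, whence $\mathrm{Var}=2$ (the additive constant appearing in both bounds), and a block of arbitrary length $b$ contributes an amount bounded above and below by absolute constants. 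Summing the per-block contributions over the $l(a)+1$ blocks, together with the cross-gap covariances, would yield the two-sided estimate $l(a)-1\le\mathrm{Var}(\mu_a)\le 4l(a)+2$.

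The main obstacle is the coupling between blocks: a carry can survive a zero-gap and feed into the next block, so the indicators $Y_k$ are correlated across blocks and the covariance sum does not split cleanly into independent per-block terms. The resolution is that the carry chain is contracting -- a surviving carry is halved at every $0$ it meets and each per-step increment is bounded -- so $\mathrm{Cov}(Y_k,Y_l)$ decays geometrically in the number of zeros lying between positions $k$ and $l$, and the total correction contributed by each gap is a uniformly bounded quantity. This is exactly the slack absorbed by the factor $4$ and the additive constants in the stated inequalities. It remains to treat the two boundary cases -- the leading block, which is preceded by no ``01'', and carries escaping above $\bin{a}$ -- and to justify exchanging the density limit with the moment computation, both of which follow from the finite-dependence remark of the previous paragraph.
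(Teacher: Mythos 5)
Your reduction of the problem is sound as far as it goes: the telescoping argument for the mean is correct (modulo the interchange of the density limit with the expectation, which you flag; it requires a tail bound such as $\frac1N\#\{n<N : c(n,a)\ge m\}\le C2^{-m}$ uniformly in $N$, which does follow from the carry structure), the identity $s_2(n+a)-s_2(n)=s_2(a)-c(n,a)$ is the right one, the identification of $\mu_a$ with the law of this quantity under i.i.d.\ fair digits of $n$ is legitimate, and your Markov description of the carry process, the count of $l(a)+1$ maximal blocks of ones, and the computation $\mathrm{Var}(\mu_1)=2$ are all correct.

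The genuine gap is in the variance bounds, which is the actual content of the theorem. The statement asserts specific constants, $l(a)-1\le \mathrm{Var}(\mu_a)\le 4l(a)+2$, and your argument never produces any: the two decisive claims --- that each block of ones contributes an amount ``bounded above and below by absolute constants,'' and that each zero-gap contributes a ``uniformly bounded'' covariance correction --- are asserted, not computed. Moreover, in the form stated they cannot suffice. There are $l(a)+1$ blocks and $l(a)$ gaps, so a per-gap correction of unspecified bounded size is of exactly the same order as the per-block main term; nothing in your argument excludes that these corrections are negative and larger in absolute value than the per-block contributions, in which case the lower bound $l(a)-1$ would fail. What your outline can deliver, once the omitted per-block computation is actually carried out, is $c_1\,l(a)-C_2\le \mathrm{Var}(\mu_a)\le C_3\,l(a)+C_4$ with unspecified constants, which is strictly weaker than the theorem. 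To pin the constants down you essentially need an exact expression for the variance, not a block-by-block estimate with slack. This is, in fact, how the variance is handled in this line of work: the present paper (Theorem \ref{thm.variance}, following \cite{petitlapin}, where the statement you are proving is established) writes $\widehat{\mu_a}$ as a product of the matrices $\hA_{a_i}(\theta)$, Taylor-expands at $0$, simultaneously triangularizes $I_0,I_1$, and obtains a closed formula for $\mathrm{Var}(\mu_a)$ as an explicit sum of digit correlations $b_{k+i}b_k$ with geometric weights; the two-sided bound in $l(a)$ then follows by estimating those sums, each occurrence of ``01'' forcing a definite positive contribution. Your probabilistic route could in principle reach the same endpoint, but only if you replace the qualitative claims by exact per-block and per-gap computations, which is precisely the part of the proof that is missing.
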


This last theorem raises the question of whether the ratio $\frac{\text{Var}(\mu_a)}{l(a)}$ converges as $l(a)$ goes to infinity. We do this in the generic case for the balanced Bernoulli measure. More precisely, we have the following central limit theorem:

\begin{theorem}\label{t.tcl}
 Let $X=(X_n)_{n\in\N}\in\{0,1\}^\N$ be a generic sequence for the balanced Bernoulli measure. Define the sequence $\left( a_X(n) \right)_{n\in\N}$ in the following way:
 $$
 a_X(n)=\sum_{k=0}^n X_k\cdot 2^k.
 $$
 For any $n\in\N$, let $\widetilde{\mu}_{a_X(n)}\in l^1\left(\sqrt{\frac{2}{n}}\Z\right)$ defined by
 $$
 \forall d \in \sqrt{\frac{2}{n}}\Z,\  \widetilde{\mu}_{a_X(n)}=\mu_{a_X(n)}\left(\sqrt{\frac{n}{2}}d\right).
 $$
 Then 
 $$
 \widetilde\mu_{a_X(n)}\underset{n\rightarrow +\infty}{\overset{weak}{\longrightarrow}}\varphi
 $$
 where
 $$
 \begin{array}{cccc}
  \varphi :& \R &\rightarrow &\R \\
           &  t & \mapsto & \frac{1}{\sqrt{2\pi}}e^{\frac{-1}{2}t^2}
 \end{array}
 $$
\end{theorem}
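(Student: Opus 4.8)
The plan is to use the method of moments. Since the standard normal law $\varphi$ is determined by its moments (its even moments $(k-1)!!=\frac{(2j)!}{2^j j!}$ for $k=2j$ satisfy Carleman's condition), it suffices to show that for almost every $X$ and every $k\in\N$ the $k$-th moment of $\widetilde\mu_{a_X(n)}$ converges as $n\to\infty$ to the $k$-th moment of $\varphi$, namely $0$ for $k$ odd and $(k-1)!!$ for $k$ even. Writing $M_k(a):=\sum_{d\in\Z}d^k\mu_a(d)$ for the unnormalized moment and using the rescaling in the statement, the $k$-th moment of $\widetilde\mu_{a_X(n)}$ is $(2/n)^{k/2}M_k(a_X(n))$, so the goal reduces to
$$
\lim_{n\to\infty}\left(\frac{2}{n}\right)^{k/2}M_k(a_X(n))=
\begin{cases}0 & k\text{ odd},\\ (k-1)!! & k\text{ even},\end{cases}
\qquad\text{for a.e. }X.
$$

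First I would exploit the representation of $\mu_a$ announced in the abstract: $\mu_a$ is obtained as a product, over the binary digits of $a$, of matrices indexed by the carry state $\{0,1\}$ whose entries are operators on $l^1(\Z)$ built from the shift $S$ (convolution by $\delta_{\pm1}$) and the identity. Passing to characteristic functions turns each shift into multiplication by $e^{\pm it}$, so that $\widehat{\mu_a}(t)=\sum_d\mu_a(d)e^{idt}$ is a scalar obtained by sandwiching a product of explicit matrices $B_x(t)$, $x\in\{0,1\}$, one per digit of $a$, between fixed boundary vectors. Then $M_k(a)=(-i)^k\frac{d^k}{dt^k}\widehat{\mu_a}(t)\big|_{t=0}$, and by the Leibniz rule this is a finite sum, over the ways of distributing $k$ derivatives among the factors, of products of numeric matrices carrying a bounded number of differentiated ("marked") positions. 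This reduces the entire problem to estimating such marked matrix products.

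Then I would analyse those products. Since $\widehat{\mu_a}(0)=1$ and $\mu_a$ has mean $0$, undifferentiated factors contribute trivially and the single-mark terms vanish in aggregate. The dominant contribution to $M_{2j}(a)$ comes from choosing $j$ distinct factors and differentiating each exactly twice: a counting argument produces $(2j-1)!!$ times the $j$-th power of an effective variance that is itself a sum over the digits of $a$. Terms where a factor is differentiated three or more times, or where marks cluster closely enough that the non-commutativity of the $B_x$ couples them, must be shown to be of strictly lower order in $n$; this is where most of the work lies and is the main obstacle. The key tool is that the $t=0$ transfer matrices are stochastic-type with a spectral gap, so the influence of a marked factor decays exponentially with distance and correlations between well-separated marks factorize up to exponentially small error, yielding precisely the Gaussian pairing combinatorics.

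Finally I would pass to the random setting. The effective variance above is an additive functional of the digit string $X_0\cdots X_n$, so the strong law of large numbers (equivalently the Birkhoff ergodic theorem for the Bernoulli shift) gives its almost-sure asymptotics; the normalization $\sqrt{2/n}$ is chosen exactly so that this limit is $1$, which is the assertion that the typical variance $\mathrm{Var}(\mu_{a_X(n)})$ is asymptotic to $n/2$ (the $j=1$ case, consistent with the bounds $l(a)-1\le\mathrm{Var}(\mu_a)\le 4l(a)+2$ and $l(a_X(n))\sim n/4$). Combining the almost-sure convergence of the effective variance with the lower-order bounds on the remaining marked-product terms gives $(2/n)^{k/2}M_k(a_X(n))\to(k-1)!!$ for even $k$ and $\to 0$ for odd $k$ almost surely, and the method of moments then yields the claimed weak convergence $\widetilde\mu_{a_X(n)}\to\varphi$.
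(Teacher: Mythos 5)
Your proposal follows the same skeleton as the paper's proof: the method of moments, the representation of $\widehat{\mu_a}$ as a product of $2\times 2$ transfer matrices indexed by the binary digits of $a$, the Leibniz--Taylor classification of terms according to how many times each factor is differentiated, and the fact that the $\theta=0$ matrices are stochastic with second eigenvalue $\frac12$, so that marked factors have exponentially decaying influence --- in the paper this spectral gap is encoded by the exact relations $\alpha_iI_j=\frac12\alpha_i$ and $\alpha_i\,{}^t(1,1)=0$ of Remark \ref{r.matrices} and quantified in Lemma \ref{l.contributions}, and the dominant contribution to the $2k$-th moment indeed comes from $k$ factors each differentiated twice, exactly as you describe. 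The genuine difference is the probabilistic ingredient. The paper never invokes the ergodic theorem in its main line of argument: it imports from Cassaigne--Mauduit--S\'ark\"ozy an almost-sure quantitative bound $C_{2,n}\leq n^{\frac12+\varepsilon}$ on the pair correlations $\sum_k b_{k+d_1}b_{k+d_2}$, \emph{uniform over all shifts and all lengths} (Lemma \ref{l.correlation}, proved via moment estimates and Borel--Cantelli), and this single bound drives both the variance asymptotics (Proposition \ref{p.variance_generic}) and the induction computing all even moments (Lemma \ref{l.moments_limits}). You propose instead the plain strong law of large numbers for an ``effective variance''. This can be made to work, but only after the step you yourself flag as the main obstacle is carried out: the local factor attached to a double mark at position $l$, when the previous mark sits at $j$, is $\frac12-b_l\sum_{i=j+1}^{l-1}b_i2^{-(l-i)}$ and depends on the gap $l-j$; one must first replace it by its gap-independent version $\frac12-c_l$ with $c_l=\sum_{m\geq1}b_lb_{l-m}2^{-m}$ (an exponentially small error whose total over all tuples of marks is $O(n^{k-1})$, deterministically), after which the dominant sum becomes $\frac1{k!}\bigl(\sum_{l\leq n}(\frac12-c_l)\bigr)^k+O(n^{k-1})$, and $\frac1n\sum_{l\leq n}c_l\to0$ almost surely does follow from the fixed-shift SLLN combined with dominated convergence over the geometric weights $2^{-m}$. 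So the two routes trade work: yours keeps the probability elementary and self-contained (it is in the spirit of the paper's own SLLN-based alternative proof of the typical variance, extended to all moments) at the cost of a more delicate deterministic factorization; the paper's uniform correlation bound is a heavier import, but its rate $n^{\frac12+\varepsilon}$ plugs directly into a cruder induction and is what allows the authors to comment on the speed of convergence of the moments.
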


\begin{remark}
 An equivalent formulation, with the same notations, is:
$$
\forall x \in \R,\ \lim_{n\rightarrow +\infty}\lim_{N\rightarrow+\infty}\frac{1}{N}\#\left\{m\leq N \ | \ \frac{s_2(m+a_X(n))-s_2(m)}{\sqrt{\frac{n}{2}}}\leq x  \right\}=\Phi(x)
$$
where $\Phi$ is the repartition function of the normal law $\mathcal{N}(0,1)$.
\end{remark}

Finally we wish to state \textsc{Cusick}'s conjecture.

Define the following quantity:
$$
\forall a \in \N,\ c_a:= \lim_{N\rightarrow+\infty}\frac{1}{N}\#\{n \leq N\ | \ s_2(n+a)\geq s_2(n) \}.
$$

The conjecture consists of two parts:
 $$
 \forall a \in \N, c_a \geq \frac12
 $$
 and
 $$
 \liminf_{a\rightarrow+\infty}c_a=\frac12.
 $$
 
 This question arose as \textsc{Cusick} was working on a similar combinatorial conjecture in \cite{cusick}. For recent advances on this, one can look at \cite{MorgenbesserSpiegelhofer} and \cite{drmota_kauers_spiegel}. In particular, the main theorem of this paper has for immediate corollary that $\frac12$ is an accumulation point for the sequence $(c_a)_{a\in\N}$.
 
 Moreover, the proof of our theorem answers a question left open in \cite{drmota_kauers_spiegel} since it states that the difference $s_2(n+a)-s(n)$ is ``usually normally distributed with mean zero and variance $\frac{|a|_2}{2}$'' where $|a|_2$ denotes the length of $a$ in base $2$.

\subsection{Outline of the paper}

The goal of this paper is to demonstrate Theorem \ref{t.tcl} by a moments method. Namely, given a sequence of probability measures, we prove the weak convergence towards the normal law $\mathcal{N}(0,1)$ by proving that all the moments of this sequence converge towards the moments of the normal law.

This article is organised as follows:

Section \ref{s.measures} deals with the measures $\mu_a$ that we already studied in \cite{petitlapin}. We recall some of their properties (and most importantly a recurrence relation between them) and write them as a finite product of matrices whose coefficients are operators on $l^1(\Z)$. This is a convenient form for our study since it allows to compute the Fourier transforms of these measures explicitly.

Section \ref{s.tcl} is devoted to the proof of Theorem \ref{t.tcl}. It is divided in the following way:

In Subsection \ref{s.characteristic}, we explicit this Fourier transform and give its Taylor expansion around 0 at order 2. This is what we need in order to compute the variance of $\mu_a$. We also mention that the characteristic function can be written as a product of matrices, which is the form that will be studied throughout the article.

Subsection \ref{s.variance} is devoted to the computation of the variance of $\mu_a$. First we do this in the general case and give an explicit formula depending only on the binary decomposition of $a$. We remark that this expression depends on some correlations of sequences in $\{0,1\}^\N$. 

Then, in Subsection \ref{s.generic}, we want to compute the ``generic'' behaviour of $\mu_a$ (meaning for a $a$ whose binary expansion is given by a balanced Bernoulli sequence). For this, we use a result in \cite {cassaigne_mauduit_sarkozy} to estimate the correlation terms. It appears that in the generic case, the variance is approximately $\frac{|a|_2}{2}$ (where $|a|_2$ is the length of $\bin{a}$. So we know that in order to get a central limit theorem, we have to renormalize $\mu_a$ by the squareroot of its variance namely $\sqrt{\frac{|a|_2}{2}}$.

In Subsection \ref{s.contribution}, since we have to compute all the moments, we need to know all the coefficients in the Taylor expansion of the characteristic function but it seems difficult to give their expression in the general case. Hence we wish to understand how ``big'' the different terms are in order to know which one will be killed by the renormalization and which one will contribute. To that end, we classify the terms in the Taylor expansion and we bound them using some algebraic properties of the matrices involved in this computation. This in turn gives bounds on the moments.

Finally, in Subsection \ref{s.moments}, we show that the moments converge towards the moments of the normal law. Thanks to the study of the contributions from the previous section, we are limited to actually computing the terms that have a chance to contribute. Some elementary linear algebra and the study of the correlations appearing in Section \ref{s.variance} are the essential tools for this.

We would like to underline the fact that the way we compute the moments limits gives a bound on the speed of convergence of the moments. However, this speed is dependant on the order of the moment and thus gives no clue as to the speed of convergence of the measures towards the normal law. This could be further studied along with a local limit theorem.

\subsection{Acknowledgement}

We wish to thank Christian Mauduit and Jo\"{e}l Rivat for their interest in this problem and for sharing their knowledge in the historical and scientific background of this question. Of course we have to thank Alexander Bufetov for his precious help regarding the moments method, especially for giving the reference needed. We also would like to thank Thomas Stoll for mentionning Cusick's Conjecture. We thank Julien Cassaigne for his useful remarks on the variance properties. Finally, we thank Lukas Spiegelhofer for making us aware of the works in \cite{drmota_kauers_spiegel}.

\section{Measures $\mu_a$ on $\Z$}\label{s.measures}

Let us start by remarking the following:
\begin{remark}
 Let $a \in \N$ and $d \in \Z$.
 $$
 \mathcal{P}_{2a,d}=\left\{ w0\ |\ w \in \mathcal{P}_{a,d} \right\}\cup\left\{ w1\ |\ w \in \mathcal{P}_{a,d} \right\}
 $$
 and
 $$
 \mathcal{P}_{2a+1,d}=\left\{ w0\ |\ w \in \mathcal{P}_{a,d-1} \right\}\cup\left\{ w1\ |\ w \in \mathcal{P}_{a+1,d+1} \right\}.
 $$
 For more details about this remark we refer the reader to  \cite{petitlapin}.
 
 From this we deduce the following:

\end{remark}

\begin{proposition}\label{p.recurrence}
 For any $a \in \N$:
 $$
 \mu_{2a}=\mu_a
 $$
 and
 $$
 \mu_{2a+1}(d)=\frac12 \mu_{a}(d-1)+\frac12 \mu_{a+1}(d+1).
 $$
\end{proposition}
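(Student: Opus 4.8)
The plan is to derive both identities from the stated combinatorial description of the solution sets $\mathcal{P}_{a,d}$ in the preceding remark, together with the fact (guaranteed by Proposition \ref{p.solution}) that each set $\mathcal{E}_{a,d}$ is a finite union of cylinders $[w_i]$. The key observation I would use throughout is that appending a digit to the right of a word corresponds to a dilation of the associated cylinder, and that this dilation multiplies the asymptotic density by exactly $\frac12$. Concretely, for a word $w$ the cylinder $[w]$ (integers whose binary expansion ends in $w$) is an arithmetic progression of common difference $2^{|w|}$, so its density is $2^{-|w|}$; appending one digit to $w$ lengthens it by one and thus halves the density. I expect this density-halving principle to be the single fact that drives the whole computation.

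\medskip

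First I would establish the even case $\mu_{2a}=\mu_a$. From the remark, $\mathcal{P}_{2a,d}=\{w0 \mid w\in\mathcal{P}_{a,d}\}\cup\{w1\mid w\in\mathcal{P}_{a,d}\}$, and this is a disjoint union (the words end in different digits). Hence $\mathcal{E}_{2a,d}=\bigcup_{w\in\mathcal{P}_{a,d}}\big([w0]\cup[w1]\big)$. For each $w$, the two cylinders $[w0]$ and $[w1]$ are disjoint and their union is exactly $[w]$ (every integer ending in $w$ ends either in $w0$ or in $w1$). Summing densities over the disjoint cylinders indexed by $\mathcal{P}_{a,d}$ then gives $\mu_{2a}(d)=\sum_{w\in\mathcal{P}_{a,d}}\operatorname{dens}[w]=\mu_a(d)$, which is the desired equality.

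\medskip

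For the odd case I would argue analogously but track the density factor explicitly. The remark gives $\mathcal{P}_{2a+1,d}=\{w0\mid w\in\mathcal{P}_{a,d-1}\}\cup\{w1\mid w\in\mathcal{P}_{a+1,d+1}\}$, again a disjoint union since the two families end in distinct digits. Thus $\mathcal{E}_{2a+1,d}$ splits as a disjoint union of the cylinders $[w0]$ for $w\in\mathcal{P}_{a,d-1}$ together with the cylinders $[w1]$ for $w\in\mathcal{P}_{a+1,d+1}$. Here, unlike the even case, the two digits do not pair up over a common $w$, so I cannot recombine $[w0]\cup[w1]$ into $[w]$; instead I use the density-halving principle directly, namely $\operatorname{dens}[w0]=\frac12\operatorname{dens}[w]$ and $\operatorname{dens}[w1]=\frac12\operatorname{dens}[w]$. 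Summing then yields
$$
\mu_{2a+1}(d)=\tfrac12\!\!\sum_{w\in\mathcal{P}_{a,d-1}}\!\!\operatorname{dens}[w]+\tfrac12\!\!\sum_{w\in\mathcal{P}_{a+1,d+1}}\!\!\operatorname{dens}[w]=\tfrac12\mu_a(d-1)+\tfrac12\mu_{a+1}(d+1).
$$

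\medskip

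The main obstacle, and the only point requiring genuine care, is the disjointness and density bookkeeping: I must verify that the two families of cylinders appearing in each decomposition are pairwise disjoint so that densities add without inclusion–exclusion corrections, and I must confirm that appending a digit halves the density uniformly. The disjointness of distinct maximal cylinders $[w_i]$ within a single $\mathcal{P}_{a,d}$ follows from Proposition \ref{p.solution}'s choice of $\mathcal{P}_{a,d}$ (no word is a suffix of another among the generators, so the cylinders partition $\mathcal{E}_{a,d}$); across the two families in the odd case, disjointness is immediate from the final digit. Everything else is the elementary density computation for arithmetic progressions, which I would not spell out in full.
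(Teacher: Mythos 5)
Your overall route is the same as the paper's: the paper deduces Proposition \ref{p.recurrence} directly from the preceding remark on $\mathcal{P}_{2a,d}$ and $\mathcal{P}_{2a+1,d}$, with exactly the density bookkeeping you carry out, and your treatment of the odd case is correct. However, the even case as written rests on a false set identity. With the paper's convention (the rightmost letter of $\bin{n}$ is the least significant digit, so the appended digit in $w0$, $w1$ sits in the units position), one has $[w0]=2[w]$ and $[w1]=2[w]+1$, hence
$$
[w0]\cup[w1]=\{\,n\in\N \ :\ \lfloor n/2\rfloor\in[w]\,\},
$$
which is in general \emph{not} equal to $[w]$. Concretely, for $w=1$: the cylinder $[1]$ is the set of odd integers, while $[10]\cup[11]$ is the set of integers congruent to $2$ or $3$ modulo $4$. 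Your parenthetical justification (``every integer ending in $w$ ends either in $w0$ or in $w1$'') is the correct statement for a digit prepended on the most significant side, namely $[w]=[0w]\sqcup[1w]$; it does not apply to the right-appended words produced by the remark.

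The slip is harmless for the conclusion, because what you need is an equality of densities, not of sets, and your own density-halving principle delivers it: $[w0]$ and $[w1]$ are disjoint (their elements have different parities) and each has density $\tfrac12\cdot 2^{-|w|}$, so their union has density $2^{-|w|}$, the density of $[w]$; summing over $w\in\mathcal{P}_{a,d}$ gives $\mu_{2a}(d)=\mu_a(d)$ exactly as in your odd case. So the even case should be argued by the same density computation as the odd one, not by recombining cylinders. A smaller point: the pairwise disjointness of the cylinders $[w]$, $w\in\mathcal{P}_{a,d}$, is not actually asserted in Proposition \ref{p.solution}; either invoke the construction in the cited reference, or refine $\mathcal{P}_{a,d}$ so that all words have a common length (replacing each $[w]$ by the disjoint cylinders obtained by prepending all digit strings of the appropriate length), after which distinct words give disjoint cylinders and both the disjointness and the halving arguments go through unchanged, since the refinement is compatible with appending digits on the right.
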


\begin{remark}
 Notice that a probability measure on $\Z$ is, in particular, an element of $l^1(\Z)$. In all that follows, for simplicity of writing, we will always identify a measure on $\Z$ with its associated sequence in $l^1(\Z)$. In particular, we see $\mu_a$ both as a measure and as an element of $l^1(\Z)$. Let us define the shift $S$ on $l^1(\Z)$.
 $$
 \begin{array}{cccc}
  S: & l^1(\Z)                       & \rightarrow &l^1(\Z)\\
     &  \left(x_n\right)_{n\in \Z}   & \mapsto     &\left(x_{n+1}\right)_{n\in \Z}.
 \end{array}
 $$
 
 Then, the identities of Proposition \ref{p.recurrence} can be written:
 $$
 \mu_{2a}=\mu_a
 $$
 and
 $$
 \mu_{2a+1}=\frac12 S^{-1}(\mu_a)+\frac12 S (\mu_{a+1}).
 $$
\end{remark}

\begin{example}
It is easy to see that $\mu_0=\delta_0$.
Then, either by standard computation or using Proposition \ref{p.recurrence}, one obtains:
$$
\mu_1=\frac14 \underset{n \leq 1}{\sum_{n \in \Z}} \delta_n \cdot 2^n.
$$
\end{example}

\begin{proposition}\label{p.distrib}
 For any $a \in \N$,
$$ \mu_a = \begin{pmatrix}Id & 0 \end{pmatrix} \; A_{a_0} \cdots A_{a_{n-1}} A_{a_n}\begin{pmatrix} \mu_0 \\ \mu_1 \end{pmatrix},
$$
where the sequence $\bin{a}=a_n...a_0$ and

$$
    A_0 = \begin{pmatrix} Id &  0  \\ \frac12 {S^{-1}}& \frac12 {S} \end{pmatrix}, \qquad 
    A_1 = \begin{pmatrix} \frac12 {S^{-1}} & \frac12 {S} \\ 0 & Id \end{pmatrix}.
$$
\end{proposition}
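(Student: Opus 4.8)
The plan is to prove a stronger, vector-valued statement by induction and then recover the proposition by projecting onto the first coordinate. Rather than tracking $\mu_a$ in isolation, I would track the \emph{pair} of consecutive measures $\begin{pmatrix}\mu_a\\\mu_{a+1}\end{pmatrix}$, and show that stripping the least significant binary digit of $a$ corresponds to multiplication by one of the matrices $A_0,A_1$. Concretely, I claim the one-step relation
$$
\begin{pmatrix}\mu_a\\\mu_{a+1}\end{pmatrix}=A_{a_0}\begin{pmatrix}\mu_{\lfloor a/2\rfloor}\\\mu_{\lfloor a/2\rfloor+1}\end{pmatrix},
$$
where $a_0$ is the last digit of $\bin{a}$.

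First I would establish this one-step relation by a case analysis on the parity of $a$, which is exactly the value of $a_0$. If $a_0=0$, write $a=2b$ with $b=\lfloor a/2\rfloor$; Proposition \ref{p.recurrence} gives $\mu_a=\mu_b$ and $\mu_{a+1}=\mu_{2b+1}=\frac12 S^{-1}\mu_b+\frac12 S\mu_{b+1}$, which is precisely the action of $A_0$ on $\begin{pmatrix}\mu_b\\\mu_{b+1}\end{pmatrix}$. If $a_0=1$, write $a=2b+1$, so that $a+1=2(b+1)$; then $\mu_a=\frac12 S^{-1}\mu_b+\frac12 S\mu_{b+1}$ and $\mu_{a+1}=\mu_{2(b+1)}=\mu_{b+1}$, which is the action of $A_1$. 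This is the heart of the argument: the two recurrences of Proposition \ref{p.recurrence} are exactly the two rows of the two matrices.

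Next I would iterate. Since $\bin{\lfloor a/2\rfloor}=a_n\cdots a_1$, applying the one-step relation repeatedly peels off the digits $a_0,a_1,\dots,a_n$ in order, and after $n+1$ steps the argument is reduced to $\lfloor a/2^{n+1}\rfloor=0$. Formally this is an induction on the number of digits $n+1$ of $a$: the base case $n=0$ (that is, $a\in\{0,1\}$) is checked directly against $\begin{pmatrix}\mu_0\\\mu_1\end{pmatrix}$, and the inductive step is the one-step relation applied to $a$ followed by the induction hypothesis applied to $\lfloor a/2\rfloor$. This yields
$$
\begin{pmatrix}\mu_a\\\mu_{a+1}\end{pmatrix}=A_{a_0}A_{a_1}\cdots A_{a_n}\begin{pmatrix}\mu_0\\\mu_1\end{pmatrix},
$$
and projecting with $\begin{pmatrix}Id&0\end{pmatrix}$ gives the stated formula for $\mu_a$.

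The computations are routine once the right object is identified, so the only genuine obstacle is the choice to induct on the consecutive pair $(\mu_a,\mu_{a+1})$ rather than on $\mu_a$ alone: with $\mu_a$ alone the odd-index recurrence mixes in $\mu_{a+1}$ and does not close, whereas the pair is self-reproducing. Two minor points require care. In the base case $a=1$ the relation reads $\mu_1=\frac12 S^{-1}\mu_0+\frac12 S\mu_1$, which looks self-referential but is a valid identity (it is the $a=0$ instance of the odd recurrence, consistent with the explicit values $\mu_0=\delta_0$ and $\mu_1=\frac14\sum_{n\le 1}2^n\delta_n$). Also, passing from $a$ to $\lfloor a/2\rfloor$ genuinely deletes the digit $a_0$ while leaving $a_1,\dots,a_n$ in place, so the bookkeeping of digit indices, and in particular the leading digit $a_n=1$, causes no difficulty.
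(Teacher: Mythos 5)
Your proof is correct and follows essentially the same route as the paper: the paper's entire proof is your one-step relation, stated in the doubling direction as $A_0\begin{pmatrix}\mu_a\\ \mu_{a+1}\end{pmatrix}=\begin{pmatrix}\mu_{2a}\\ \mu_{2a+1}\end{pmatrix}$ and $A_1\begin{pmatrix}\mu_a\\ \mu_{a+1}\end{pmatrix}=\begin{pmatrix}\mu_{2a+1}\\ \mu_{2a+2}\end{pmatrix}$, both read off from Proposition \ref{p.recurrence}. The only difference is that the paper leaves the iteration over the digits implicit, whereas you spell out the induction on the pair $(\mu_a,\mu_{a+1})$ and check the base case explicitly.
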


\begin{proof}
 Suffice to remark that for any $a \in \N$,
 $$
 A_0 \begin{pmatrix} \mu_a \\ \mu_{a+1} \end{pmatrix}=\begin{pmatrix} \mu_{2a} \\ \mu_{2a+1} \end{pmatrix}
 $$
 and
 $$
 A_1 \begin{pmatrix} \mu_a \\ \mu_{a+1} \end{pmatrix}=\begin{pmatrix} \mu_{2a+1} \\ \mu_{2a+2} \end{pmatrix}
 $$
 with Proposition \ref{p.recurrence}.
\end{proof}

Notice that this proposition is a clearer version of Theorem 1.2.1 in \cite{petitlapin}.

\section{Central limit theorem}\label{s.tcl}

\subsection{Characteristic function}\label{s.characteristic}

Let $a \in \N$ with $\bin{a}=a_n...a_0$. The characteristic function of $\mu_a$, denoted $\widehat{\mu_a}$ is defined in the standard way:
$$
\forall \theta \in [0,2\pi),\ \widehat{\mu_a}(\theta)=\sum_{d \in \Z}e^{id\theta}\mu_a(d)
$$

By Proposition \ref{p.recurrence}, the characteristic function $\widehat{\mu_a}:[0,2\pi)\rightarrow \C$ is given by:
$$
\forall \theta \in [0,2\pi),\ \widehat{\mu_a}(\theta)=\begin{pmatrix}1 & 0 \end{pmatrix} \; \hA_{a_0} \cdots \hA_{a_{n-1}} \hA_{a_n}\begin{pmatrix} \widehat{\mu_0}(\theta) \\ \widehat{\mu_1}(\theta) \end{pmatrix}
$$
where
$$
  \forall \theta \in [0,2\pi), \ 
  \hA_0(\theta):= \begin{pmatrix} 1 &  0  \\\frac12 e^{i \theta}& \frac12 e^{-i \theta} \end{pmatrix}, \ 
   \hA_1(\theta):= \begin{pmatrix} \frac12 e^{i \theta} & \frac12 e^{-i \theta}\\  0 & 1 \end{pmatrix}.
$$
Indeed, from the recurrence relations in Proposition \ref{p.recurrence}, one has
$$
\forall \theta \in [0,2\pi),\ \widehat{\mu_{2a}}(\theta)=\widehat{\mu_a}(\theta)
$$
and
$$
\forall \theta \in [0,2\pi),\ \widehat{\mu_{2a+1}}(\theta)=\frac12 e^{i\theta} \widehat{\mu_a}(\theta)+\frac12 e^{-i\theta} \widehat{\mu_{a+1}}(\theta).
$$
These recurrence relations on the characteristic function justify its writting as a product of matrices.

A quick computation yields
$$
\forall \theta \in [0, 2\pi),\quad \widehat{\mu_0}(\theta)=1,\quad \widehat{\mu_1}(\theta)=\frac{e^{i\theta}}{2-e^{-i\theta}}
$$
and so,
$$
\forall \theta \in [0,2\pi),\ \widehat{\mu_a}(\theta)=\begin{pmatrix}1 & 0 \end{pmatrix} \; \hA_{a_0} \cdots \hA_{a_{n-1}} \hA_{a_n}\begin{pmatrix} 1 \\ \frac{e^{i\theta}}{2-e^{-i\theta}}\end{pmatrix}.
$$
Let us now define the matrices playing a role in the Taylor expansion of $\widehat{\mu_a}$ near 0.
$$
  I_0 = \begin{pmatrix}1 &0 \\\frac12&\frac12\end{pmatrix}, \quad\; 
  \alpha_0 = \frac{1}2 \begin{pmatrix}0&0\\ 1&-1\end{pmatrix}, \quad\; 
  \beta_0 = \frac12 \begin{pmatrix}0&0\\ 1&1\end{pmatrix}, 
$$
$$
  I_1 = \begin{pmatrix}\frac12 &\frac12\\0&1\end{pmatrix}, \quad\; 
  \alpha_1 = \frac{1}2 \begin{pmatrix}1&-1\\0&0\end{pmatrix}, \quad\; 
  \beta_1 = \frac12 \begin{pmatrix}1&1\\0&0\end{pmatrix}.
$$
Indeed, we have:

$$
\hA_j(\theta)=I_j + i\theta \alpha_j-\frac12 \theta^2 \beta_j + O(\theta^3).
$$
with $j \in \{0,1\}$.

Notice that the Taylor expansion near 0 of $\theta \mapsto \frac{e^{-i\theta}}{2-e^{i\theta}}$ is:
$$
\frac{e^{i\theta}}{2-e^{-i\theta}} =1-\theta^2+O(\theta^3).
$$

\subsection{Computation of the variance}\label{s.variance}

Define the variance of $\mu_a$:
$$
\text{Var}(\mu_a)=\sum_{d \in \Z}\mu_{a}(d)d^2.
$$

\begin{theorem}\label{thm.variance}
 For any $a \in \N$ with $\bin{a}=a_n...a_0$, denote, for any $j\in \{0,...,n\},\ b_j=(-1)^{a_j+1}$. The variance of $\mu_a$ is given by the following:
  $$
 \text{Var}(\mu_a)=\frac{n+3}{2}-\frac{1}{2^{n+1}}-\frac12 \sum_{i=1}^n\sum_{k=0}^{n-i}\frac{b_{k+i}b_k}{2^{i}}+\sum_{k=0}^{n}\frac{b_{k}+b_{n-k}}{2^{k+1}}.
 $$
\end{theorem}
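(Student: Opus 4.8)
The plan is to read the variance off the second-order Taylor coefficient of the characteristic function. Since $\mu_a$ has mean $0$ (recalled above), one has $\text{Var}(\mu_a)=-\widehat{\mu_a}''(0)$, so it suffices to extract the coefficient of $\theta^2$ in $\widehat{\mu_a}(\theta)=1-\tfrac12\text{Var}(\mu_a)\,\theta^2+O(\theta^3)$. I would substitute the matrix expansions $\hA_j(\theta)=I_j+i\theta\alpha_j-\tfrac12\theta^2\beta_j+O(\theta^3)$ together with the initial-vector expansion from Subsection \ref{s.characteristic}, namely $u-\theta^2 e_2+O(\theta^3)$ with $u=\begin{pmatrix}1\\1\end{pmatrix}$, $e_1=\begin{pmatrix}1\\0\end{pmatrix}$, $e_2=\begin{pmatrix}0\\1\end{pmatrix}$, into the product formula for $\widehat{\mu_a}$, and collect the terms of order $\theta^2$.

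The computation hinges on a handful of identities for the building blocks, which I would check by direct multiplication: $I_j u=u$ for $j\in\{0,1\}$, $\alpha_0 u=\alpha_1 u=0$, and $\beta_0 u=e_2$, $\beta_1 u=e_1$. The first shows that every trailing product $\prod_{k>j}I_{a_k}$ fixes $u$; combined with $\alpha_j u=0$ this makes both the single-$\alpha$ contribution (which would be the mean, confirming that it vanishes) and the a priori dangerous double-$\alpha$ contribution $\sum_{j<l}(\cdots)\alpha_{a_j}(\cdots)\alpha_{a_l}\bigl(\prod_{k>l}I_{a_k}\bigr)u$ vanish, since in the latter the innermost factor already yields $\alpha_{a_l}u=0$. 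Only the $\beta$-terms and the $-\theta^2 e_2$ correction survive, giving
\[
\text{Var}(\mu_a)=2\begin{pmatrix}1&0\end{pmatrix}\Pi\,e_2+\sum_{j=0}^{n}\begin{pmatrix}1&0\end{pmatrix}I_{a_0}\cdots I_{a_{j-1}}\beta_{a_j}u,\qquad \Pi=I_{a_0}\cdots I_{a_n}.
\]

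It then remains to understand the row vector $g_j:=\begin{pmatrix}1&0\end{pmatrix}I_{a_0}\cdots I_{a_{j-1}}$. Because $\begin{pmatrix}1&0\end{pmatrix}u=1$ and each $I_{a_k}$ fixes $u$, the entries of $g_j$ always sum to $1$, so I may write $g_j=\begin{pmatrix}p_j&1-p_j\end{pmatrix}$. A one-line computation of $g_j I_0$ and $g_j I_1$ gives the scalar recursion $p_{j+1}=\tfrac12 p_j+\tfrac12(1-a_j)$ with $p_0=1$, whence $p_j=2^{-j}+\sum_{k=0}^{j-1}(1-a_k)2^{-(j-k)}$. Using $\beta_{a_j}u=e_2$ when $a_j=0$ and $e_1$ when $a_j=1$, the $j$-th summand equals $1-p_j$ or $p_j$ respectively, while the boundary term is $2\begin{pmatrix}1&0\end{pmatrix}\Pi e_2=2(1-p_{n+1})$.

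Finally I would pass to $b_j=2a_j-1$ (so $a_j=0\Leftrightarrow b_j=-1$), rewrite each summand uniformly as $\tfrac12-\tfrac{b_j}{2}+p_j b_j$, insert the closed form of $p_j$, and re-index the resulting double sum by $i=j-k$; this is precisely where the quadratic correlation $\sum_i\sum_k b_k b_{k+i}2^{-i}$ appears, not from a product of two matrices but from the interplay between the digit $a_j$ selecting the active column of $\beta_{a_j}$ and the dependence of $p_j$ on the earlier digits $a_0,\dots,a_{j-1}$. Collecting the constant into $\tfrac{n+3}{2}$, the $-2^{-(n+1)}$ boundary contribution, and the two linear sums (one of which becomes $\sum_k b_{n-k}2^{-(k+1)}$ after the reflection $k\mapsto n-k$) yields the stated formula. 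The main obstacle is organizational rather than conceptual: harvesting the $\theta^2$ coefficient cleanly and carrying out the several re-indexings without sign or boundary slips. The one genuine insight is the identity $\alpha_j u=0$, which collapses the double-$\alpha$ term and explains why the correlation enters only through a single pivot matrix.
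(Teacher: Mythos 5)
Your proposal is correct and follows essentially the same route as the paper: the variance is read off as the quadratic Taylor coefficient of the matrix-product form of $\widehat{\mu_a}$, the identities $I_j\left(\begin{smallmatrix}1\\1\end{smallmatrix}\right)=\left(\begin{smallmatrix}1\\1\end{smallmatrix}\right)$ and $\alpha_j\left(\begin{smallmatrix}1\\1\end{smallmatrix}\right)=0$ eliminate all $\alpha$-contributions, and the surviving $\beta$-sum plus boundary term is evaluated by collapsing the $2\times 2$ products to a single scalar sequence before re-indexing by $i=j-k$ to produce the correlation sum. The only difference is bookkeeping: the paper conjugates by $P=\left(\begin{smallmatrix}1&1\\-1&1\end{smallmatrix}\right)$ to triangularize $I_0,I_1$ and tracks the top-right entry of $\widetilde I_{a_0}\cdots\widetilde I_{a_k}$, whereas you track the row vector $\begin{pmatrix}1&0\end{pmatrix}I_{a_0}\cdots I_{a_{j-1}}=\begin{pmatrix}p_j&1-p_j\end{pmatrix}$ via the recursion $p_{j+1}=\tfrac12 p_j+\tfrac12(1-a_j)$; these two scalars are affinely equivalent, so the computations coincide.
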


\begin{proof}
 Notice that the variance is given by:
 $$
 \text{Var}(\mu_a)=\begin{pmatrix}1 & 0 \end{pmatrix} \left(\beta_{a_0}+I_{a_0}\beta_{a_1}+...+I_{a_0}\cdots I_{a_{n-1}}\beta_{a_n}\right)\begin{pmatrix}1 \\ 1 \end{pmatrix} +\begin{pmatrix}1 & 0 \end{pmatrix}I_{a_0}\cdots I_{a_{n}}\begin{pmatrix}0 \\ 2 \end{pmatrix}.
 $$
 Since $\alpha_j \begin{pmatrix}1 \\1 \end{pmatrix}=0$ and $I_j\begin{pmatrix}1 \\1 \end{pmatrix}=\begin{pmatrix}1 \\1 \end{pmatrix}$ and since the variance is given by the quadratic coefficient in the Taylor expansion of the characteristic function.
 
 We now apply a change of basis to simultanously trigonalize the matrices $I_0$ and $I_1$.
 
 Let us note
 $$
 P:=\begin{pmatrix}
     1  & 1 \\
     -1 & 1 
    \end{pmatrix}, \quad 
    P^{-1}=\frac12\begin{pmatrix}
		   1 &-1\\
		   1 & 1
		   \end{pmatrix}
 $$
 and compute
 $$\forall j \in \{0,1\}, \ 
 \widetilde{I_j}:=P I_j P^{-1}=\begin{pmatrix}
                  1 & \frac{(-1)^{j+1}}{2}\\
                  0 & \frac12
                 \end{pmatrix}, \quad
 \widetilde{\beta_j}:=P \beta_j P^{-1}=\begin{pmatrix}
                  \frac12 & 0\\
                  -\frac{(-1)^{j+1}}{2}& 0
                 \end{pmatrix}
 $$
 and
 $$
 P\begin{pmatrix}1 \\1 \end{pmatrix}=\begin{pmatrix}2 \\0 \end{pmatrix}, \quad P\begin{pmatrix}0 \\2 \end{pmatrix}=\begin{pmatrix}2 \\2 \end{pmatrix}, \quad 
 \begin{pmatrix}1 & 0 \end{pmatrix}P^{-1}=\begin{pmatrix}\frac12 & -\frac12 \end{pmatrix}.
 $$
 With this change of basis, the variance becomes:
 $$
 \text{Var}(\mu_a)=\begin{pmatrix}\frac12 & -\frac12 \end{pmatrix}\left(\widetilde\beta_{a_0}+\widetilde I_{a_0}\widetilde\beta_{a_1}+...+\widetilde I_{a_0}\cdots \widetilde I_{a_{n-1}}\widetilde\beta_{a_n}\right)\begin{pmatrix}2 \\ 0 \end{pmatrix} +\begin{pmatrix}\frac12 & -\frac12 \end{pmatrix}\widetilde I_{a_0}\cdots \widetilde I_{a_{n}}\begin{pmatrix}2 \\ 2 \end{pmatrix}.
 $$
 Notice now that for any $k \in \{0,...n\}$:
 $$
 \widetilde I_{a_0}\cdots \widetilde I_{a_{k}}=\begin{pmatrix}
                                                1 & \sum_{i=0}^k\frac{b_i}{2^{k+1-i}}\\
                                                0 & \frac{1}{2^{k+1}}
                                               \end{pmatrix}
 $$
 so, for any $k \in \{0,...n-1\}$:
  $$
 \widetilde I_{a_0}\cdots \widetilde I_{a_{k}}\widetilde\beta_{a_{k+1}}=\begin{pmatrix}
                                                \frac12-\frac{b_{k+1}}{2}\sum_{i=0}^k\frac{b_i}{2^{k+1-i}}&0\\
                                                -\frac{b_{k+1}}{2^{k+2}} & 0
                                               \end{pmatrix}
 $$
 hence we get 
 $$
 \begin{array}{ll}
   \text{Var}(\mu_a)=&\begin{pmatrix}\frac12 & -\frac12 \end{pmatrix}\left(\begin{pmatrix}\frac12 & 0 \\ -\frac{b_0}{2} & 0\end{pmatrix} + \displaystyle\sum_{k=0}^{n-1} \begin{pmatrix}
                                                \frac12-\frac{b_{k+1}}{2}\displaystyle\sum_{i=0}^k\frac{b_i}{2^{k+1-i}}&0\\
                                                -\frac{b_{k+1}}{2^{k+2}} & 0
                                               \end{pmatrix}\right)\begin{pmatrix}2 \\ 0 \end{pmatrix} \\&+ \begin{pmatrix}\frac12 & -\frac12 \end{pmatrix}\begin{pmatrix}
                                                1 & \displaystyle\sum_{i=0}^n\frac{b_i}{2^{n+1-i}}\\
                                                0 & \frac{1}{2^{n+1}}
                                               \end{pmatrix}\begin{pmatrix}2 \\ 2 \end{pmatrix}.
 \end{array}
 $$
 so
  $$
   \text{Var}(\mu_a)=\begin{pmatrix}\frac12 & -\frac12 \end{pmatrix}\left(\begin{pmatrix}1  \\ -{b_0} \end{pmatrix} + \displaystyle\sum_{k=0}^{n-1} \begin{pmatrix}
                                                1-{b_{k+1}}\displaystyle\sum_{i=0}^k\frac{b_i}{2^{k+1-i}}\\
                                                -\frac{b_{k+1}}{2^{k+2}}
                                               \end{pmatrix}\right) + 
                                                1+ \displaystyle\sum_{i=0}^n\frac{b_i}{2^{n+1-i}}      -\frac{1}{2^{n+1}}
 $$
 which yields:
  $$
 \text{Var}(\mu_a)=\frac{n+3}{2}-\frac{1}{2^{n+1}}+\frac{b_0}{2}-\frac12 \sum_{0\leq i \leq k \leq n-1}\frac{b_{k+1}b_i}{2^{k+1-i}}+\sum_{k=0}^{n-1}\frac{b_{k+1}}{2^{k+2}}+\sum_{i=0}^n \frac{b_i}{2^{n+1-i}}
 $$
 and so
 $$
 \text{Var}(\mu_a)=\frac{n+3}{2}-\frac{1}{2^{n+1}}-\frac12 \sum_{0\leq i \leq k \leq n-1}\frac{b_{k+1}b_i}{2^{k+1-i}}+\sum_{k=0}^{n}\frac{b_{k}}{2^{k+1}}+\sum_{i=0}^n \frac{b_i}{2^{n+1-i}}
 $$
 which can be written
 $$
 \text{Var}(\mu_a)=\frac{n+3}{2}-\frac{1}{2^{n+1}}-\frac12 \sum_{0\leq i \leq k \leq n-1}\frac{b_{k+1}b_i}{2^{k+1-i}}+\sum_{k=0}^{n}\frac{b_{k}+b_{n-k}}{2^{k+1}},
 $$
 or even:
  $$
 \text{Var}(\mu_a)=\frac{n+3}{2}-\frac{1}{2^{n+1}}-\frac12 \sum_{i=1}^n\sum_{k=0}^{n-i}\frac{b_{k+i}b_k}{2^{i}}+\sum_{k=0}^{n}\frac{b_{k}+b_{n-k}}{2^{k+1}}.
 $$
 
\end{proof}

\subsection{Generic case of the variance}\label{s.generic}

In all that follows, we use the following notations:
\begin{itemize}
 \item $X$ denotes a sequence in $\{0,1\}^\N$ (and we endow the set $\{0,1\}^\N$ with the balanced Bernoulli probability measure.
 \item For any sequence $X$, define $a_X(n)=\displaystyle\sum_{k=0}^n X_k \cdot 2^k$
 \item As in Theorem \ref{thm.variance}, for any $X \in \{0,1\}^\N$, define the sequence $\left( b_j \right)_{j \in \N}$ by $b_j=(-1)^{X_j+1}$. 
\end{itemize}

We wish to prove the following:
\begin{proposition}\label{p.variance_generic}
 For almost every $X \in \{0,1\}^\N$ 
 $$
 \text{Var}(\mu_{a_X(n)})\underset{n\rightarrow \infty}{\sim} \frac{n}{2}.
 $$
\end{proposition}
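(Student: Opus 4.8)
The plan is to start from the closed form for $\text{Var}(\mu_a)$ established in Theorem \ref{thm.variance}, specialize it to $a=a_X(n)$, and show that every contribution other than the leading $\frac{n+3}{2}$ is negligible compared to $n$ for almost every $X$. Writing the formula out,
$$
\text{Var}(\mu_{a_X(n)})=\frac{n+3}{2}-\frac{1}{2^{n+1}}-\frac12 \sum_{i=1}^n\sum_{k=0}^{n-i}\frac{b_{k+i}b_k}{2^{i}}+\sum_{k=0}^{n}\frac{b_{k}+b_{n-k}}{2^{k+1}},
$$
the first term already satisfies $\frac{n+3}{2}\sim\frac n2$. The term $\frac{1}{2^{n+1}}$ tends to $0$, and since $|b_j|=1$ the final sum is bounded in absolute value by $\sum_{k\ge 0}2^{-k}=2$, \emph{deterministically}. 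So both of these are $O(1)$ and play no role asymptotically, and the whole statement reduces to proving that the double sum is $o(n)$ almost surely.

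Next I would repackage the double sum in terms of autocorrelations. Setting $\gamma_i(n):=\sum_{k=0}^{n-i}b_{k+i}b_k$, the correlation of the sign sequence $(b_j)$ at lag $i$ over the window $\{0,\dots,n\}$, the double sum equals $S_n:=\sum_{i=1}^n 2^{-i}\gamma_i(n)$. Because the weights $2^{-i}$ are summable, one has the clean estimate
$$
|S_n|\le \Big(\max_{1\le i\le n}|\gamma_i(n)|\Big)\sum_{i=1}^n 2^{-i}\le \max_{1\le i\le n}|\gamma_i(n)|,
$$
so it suffices to control the correlation sums $\gamma_i(n)$, uniformly in the lag $i$.

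This is where I would invoke the pseudorandomness of a generic balanced Bernoulli sequence. Since $b_j=(-1)^{X_j+1}$ are i.i.d. symmetric $\pm1$ variables, $(b_0,\dots,b_n)$ is a typical random binary sequence, and $\max_{1\le i\le n}|\gamma_i(n)|$ is dominated by the order-$2$ correlation measure of this sequence. The estimate on that measure from \cite{cassaigne_mauduit_sarkozy} then gives, for almost every $X$, a bound of order $\sqrt{n\log n}$ (any bound that is $o(n)$ would suffice), whence $S_n=O(\sqrt{n\log n})$ and
$$
\text{Var}(\mu_{a_X(n)})=\frac{n+3}{2}+O\!\left(\sqrt{n\log n}\right)\underset{n\to\infty}{\sim}\frac n2
$$
for almost every $X$, which is the claim.

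The main obstacle is precisely this almost-sure, uniform-in-lag control of $\max_{1\le i\le n}|\gamma_i(n)|$. The naive bound $|\gamma_i(n)|\le n-i+1\le n$ is worthless, being of the same order as the leading term; one genuinely needs the square-root cancellation produced by independence. Two points require care: the estimate must hold simultaneously for \emph{all} lags $1\le i\le n$, and it must hold almost surely along all $n$, which typically means upgrading a high-probability bound for each fixed $n$ to an almost-sure statement by a Borel--Cantelli argument (possibly proved along a subsequence of $n$ and then filled in using the slow, controlled variation of the sums). The correlation-measure estimate of \cite{cassaigne_mauduit_sarkozy} is exactly what provides this input.
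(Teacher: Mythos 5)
Your proposal is correct and follows essentially the same route as the paper: reduce to showing the exponentially-weighted correlation double sum is $o(n)$ almost surely, bound it by the maximal lag-correlation (your $\max_i|\gamma_i(n)|$ is exactly the paper's $C_{2,n}$, the order-$2$ correlation measure), and control that quantity almost surely via the moment estimates of \cite{cassaigne_mauduit_sarkozy} combined with Borel--Cantelli, which is precisely the content of the paper's Lemma \ref{l.correlation} (the paper gets $n^{1/2+\varepsilon}$ rather than $\sqrt{n\log n}$, an immaterial difference). The Borel--Cantelli upgrade you flag as the delicate step is carried out in the paper directly from the even-moment bound $\E\bigl(C_{2,n}^{2l}\bigr)\leq 5n^{4+l}(4l)^l$, with no need for a subsequence argument.
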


In order to prove this proposition, we first need a technical lemma:
 
 \begin{lemma}\label{l.correlation}
Let $X\in \{0,1\}^\N$ and define the quantity: 
 $$
 C_{2,n}=\max_{M,D}\left|\sum_{k=1}^M b_{k+d_1}b_{k+d_2}\right|,
 $$
 where the maximum is taken on all $D=(d_1,d_2)$ and $M$ such that $M+d_2\leq n$.
 
 For almost every $X \in \{0,1\}^\N$ and for every $\varepsilon$ there exists $n_{\varepsilon,X}$ such that:
 $$
 \forall n \geq n_{\varepsilon,X}, \  \left|C_{2,n} \right|< n^{\frac12+\varepsilon}
 $$
 \end{lemma}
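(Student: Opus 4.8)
The plan is to first bound a single correlation sum with high probability, then eliminate the maximum by a union bound over the polynomially many admissible parameters, and finally pass to an almost sure statement via the Borel--Cantelli lemma. Since the $X_k$ are i.i.d.\ balanced Bernoulli, the variables $b_j=(-1)^{X_j+1}$ are i.i.d.\ uniform on $\{-1,+1\}$ (Rademacher variables); in particular $\E[b_j]=0$ and $\E[b_j b_{j'}]=0$ for $j\neq j'$. By symmetry of the product we may assume $d_1<d_2$ (the diagonal case $d_1=d_2$ gives a product identically equal to $1$ and is not a genuine correlation), and we set $S(M,d_1,d_2):=\sum_{k=1}^M b_{k+d_1}b_{k+d_2}$. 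The core difficulty is that the summands $b_{k+d_1}b_{k+d_2}$ are \emph{not} independent, so $S$ cannot be treated as a sum of i.i.d.\ terms directly.

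The key step is a uniform tail estimate for $S(M,d_1,d_2)$ valid for all admissible $(M,d_1,d_2)$ with $M+d_2\le n$. Writing $S$ as an off-diagonal quadratic form $S=\tfrac12\, b^{\mathsf T} A b$ in the independent Rademacher vector $b$, where $A$ is symmetric with zero diagonal and entries in $\{0,1\}$, one checks that $A=B+B^{\mathsf T}$ for a partial shift-by-$(d_2-d_1)$ matrix $B$, so that $\E[S]=0$, $\|A\|_{\mathrm F}^2=2M=O(n)$, and $\|A\|_{\mathrm{op}}\le 2=O(1)$. The Hanson--Wright inequality for quadratic forms in Rademacher variables then yields a constant $c>0$ with
$$
\mathbb P\bigl[\,|S(M,d_1,d_2)|>t\,\bigr]\le 2\exp\!\Bigl(-c\min\bigl(t^2/n,\,t\bigr)\Bigr).
$$
Taking $t=n^{1/2+\varepsilon}$ gives $\mathbb P[\,|S|>n^{1/2+\varepsilon}\,]\le 2\exp(-c\,n^{2\varepsilon})$ for all large $n$. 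Equivalently, and this is the route in the spirit of \cite{cassaigne_mauduit_sarkozy}, one may prove the same tail by estimating the even moments $\E[S^{2p}]$ through the combinatorics of pair-partitions of the index set, which is elementary and self-contained.

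It remains to remove the maximum. The number of admissible triples $(M,d_1,d_2)$ with $M+d_2\le n$ is at most $n^3$, so a union bound gives
$$
\mathbb P\bigl[\,C_{2,n}>n^{1/2+\varepsilon}\,\bigr]\le 2\,n^3\exp(-c\,n^{2\varepsilon}).
$$
Since a stretched exponential beats any polynomial, the right-hand side is summable in $n$, and Borel--Cantelli shows that, for fixed $\varepsilon$, almost surely the event $\{C_{2,n}>n^{1/2+\varepsilon}\}$ occurs for only finitely many $n$; that is, there exists $n_{\varepsilon,X}$ with $|C_{2,n}|<n^{1/2+\varepsilon}$ for all $n\ge n_{\varepsilon,X}$. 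To obtain a single full-measure set valid for every $\varepsilon>0$, apply this to the countable family $\varepsilon_m=1/m$ and intersect the corresponding full-measure sets; monotonicity of the bound in $\varepsilon$ then covers all $\varepsilon>0$.

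The main obstacle is the dependence among the summands of $S$: once a sufficiently strong (stretched-exponential) concentration bound for each individual correlation sum is available, the union bound and Borel--Cantelli steps are routine. Establishing that concentration --- via Hanson--Wright or, more in line with the cited work, via a direct even-moment computation controlling the relevant pair-partition count --- is therefore the crux of the argument.
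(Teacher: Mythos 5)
Your proof is correct, but it takes a genuinely different route from the paper's. The paper does not establish any concentration estimate itself: it imports from \cite{cassaigne_mauduit_sarkozy} (inequalities (2.32)--(2.33)) the bound $\E\left(C_{2,n}^{2l}\right)\leq 5n^{4+l}(4l)^l$ on the $2l$-th moment of the maximum $C_{2,n}$ itself, so the union over the parameters $(M,d_1,d_2)$ is already absorbed into the cited result; then, for a fixed $\varepsilon$, it chooses $l$ so large that $2\varepsilon l-4>2$, making $\sum_n \E\left(C_{2,n}^{2l}/n^{(1/2+\varepsilon)2l}\right)$ convergent, and concludes by Markov/Borel--Cantelli that $C_{2,n}/n^{1/2+\varepsilon}\to 0$ almost surely. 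You instead prove a stretched-exponential tail bound for each individual correlation sum via Hanson--Wright applied to the off-diagonal quadratic form $S=\frac12\,b^{\mathsf T}Ab$ (your computations $\|A\|_{\mathrm F}^2=2M$ and $\|A\|_{\mathrm{op}}\le 2$ are correct, since $B$ is a sub-permutation matrix), and you carry out the union bound over the at most $n^3$ admissible triples explicitly. Both arguments are valid. Yours is self-contained modulo a standard inequality and is quantitatively stronger: a stretched-exponential tail would in fact give $C_{2,n}=O(\sqrt{n\log n})$ rather than merely $n^{1/2+\varepsilon}$. The paper's is shorter given the reference, needs only polynomial moments of a fixed (but $\varepsilon$-dependent) order, and stays within the pseudorandomness framework of \cite{cassaigne_mauduit_sarkozy} that motivates the lemma. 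Two further points in your favour: your reading that $d_1<d_2$ is forced (excluding the diagonal) is the right one --- if $d_1=d_2$ were allowed the sum would equal $M$ and the lemma would be false --- and it matches the definition of correlation measures in the cited work; and your explicit intersection over the countable family $\varepsilon_m=1/m$ handles the quantifier ``for every $\varepsilon$'' cleanly, a step the paper passes over in silence since it fixes $\varepsilon$ before invoking Borel--Cantelli.
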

 
 \begin{proof}
In \cite{cassaigne_mauduit_sarkozy}, the following quantity is studied:
 $$
 C_m\left((b_i)_{i\in \{0,...,n\}}\right):=\max_{M,D}\left|\sum_{k=1}^M b_{k+d_1}\times...\times b_{k+d_m}\right|,
 $$
 where the maximum is taken on all $D=(d_1,...,d_m)$ and $M$ such that $M+d_m\leq n$.
 
 So we have:
 $$
 C_{2,n}= C_2\left((b_i)_{i\in \{0,...,n\}}\right).
 $$
 
 From (2.32) and (2.33) in \cite{cassaigne_mauduit_sarkozy}, we know that, for any $l \geq1$:
 
 $$
 \E\left(C_2\left((b_i)_{i\in \{0,...,n\}}\right)^{2l}\right)\leq 5n^{4+l}(4l)^l.
 $$
 
 Let $\varepsilon >0$,
 $$
  \E\left(\frac{C_{2,n}^{2l}}{n^{(\frac12+\varepsilon)2l}}\right)\leq \frac{5n^{4+l}(4l)^l}{n^{(\frac12+\varepsilon)2l}}.
 $$
 and 
 $$
 \frac{5n^{4+l}(4l)^l}{n^{(\frac12+\varepsilon)2l}}=\frac{5(4l)^l}{n^{2\varepsilon l-4}}.
 $$
 Now, if $l$ is big enough, then $2\varepsilon l-4>2$ and thus the series
 $$
 \sum_{n=1}^{+\infty}\E\left(\frac{C_{2,n}^{2l}}{n^{(\frac12+\varepsilon)2l}}\right)
 $$
 converges.
 
 By Borel-Cantelli lemma, $\frac{C_{2,n}^{2l}}{n^{(\frac12+\varepsilon)^{2l}}}\overset{a.s.}{\underset{n\rightarrow +\infty}{\longrightarrow}}0$ and thus $\frac{C_{2,n}}{n^{(\frac12+\varepsilon)}}\overset{a.s.}{\underset{n\rightarrow +\infty}{\longrightarrow}}0$.
 Hence:
 $$
 a.e. X, \ \exists n_{\varepsilon,X}, \, \forall n\geq n_{\varepsilon,X},\  \frac{C_{2,n}}{n^{(\frac12+\varepsilon)}}<1
 $$
 and thus 
 $$
 \left|C_{2,n} \right|< n^{\frac12+\varepsilon}
 $$
 for $n$ big enough.
 
\end{proof}

Let us now prove Proposition \ref{p.variance_generic}.

\begin{proof}[Proof of Proposition \ref{p.variance_generic}]
Note that, with Theorem \ref{thm.variance}, for any $n$, 
   $$
 \text{Var}(\mu_a)=\frac{n+3}{2}-\frac{1}{2^{n+1}}-\frac12 \sum_{i=1}^n\sum_{k=0}^{n-i}\frac{b_{k+i}b_k}{2^{i}}+\sum_{k=0}^{n}\frac{b_{k}-b_{n-k}}{2^{k+1}}.
 $$
 where the $b_i$ are random variables which can take value in $\{-1,1\}$ with probability $\frac12$. The only thing to prove in order to get the result is that:
 $$
 \lim_{n\rightarrow\infty}\frac{1}{n}\left(\sum_{i=1}^n\sum_{k=0}^{n-i}\frac{b_{k+i}b_k}{2^{i}}\right)=0
 $$
 since it is obvious that:
 $$
 \lim_{n\rightarrow\infty}\frac{1}{n}\left(\frac{3}{2}-\frac{1}{2^{n+1}}+\sum_{k=0}^{n}\frac{b_{k}-b_{n-k}}{2^{k+1}}\right)=0.
 $$
 
 Let us estimate $\displaystyle\sum_{i=1}^n\sum_{k=0}^{n-i}\frac{b_{k+i}b_k}{2^{i}}$.
 
With Lemma \ref{l.correlation}, for any $\varepsilon >0$, for any $n$ big enough:
 \begin{align*}
   \left|\sum_{i=1}^n\sum_{k=0}^{n-i}\frac{b_{k+i}b_k}{2^{i}}\right|& \leq \sum_{i=1}^n\frac{\left|C_{2,n}\right|}{2^i}\\
   & \leq \sum_{i=1}^n\frac{n^{\frac12 + \varepsilon}}{2^i}\\
   & \leq n^{\frac12 + \varepsilon},
 \end{align*}
which ends the proof.

\end{proof}

\subsection{Another proof for the typical variance}

Notice that we could also do things differently in order to compute the typical variance without Lemma \ref{l.correlation}. Another way to write the variance, for any integer $a$ is the following:

 \begin{align*}
   \text{Var}(\mu_a)&=\frac{n+3}{2}-\frac{1}{2^{n+1}}-\frac12 \sum_{i=1}^n\sum_{k=0}^{n-i}\frac{b_{k+i}b_k}{2^{i}}+\sum_{k=0}^{n}\frac{b_{k}+b_{n-k}}{2^{k+1}}\\
   &=\frac{n+3}{2}-\frac{1}{2^{n+1}}-\frac12 \left(\sum_{i=1}^n\sum_{k=0}^{n-i}\frac{1}{2^{i}}-2\sum_{i=1}^n\frac{\sigma_i(a)}{2^{i}}\right)+\sum_{k=0}^{n}\frac{b_{k}+b_{n-k}}{2^{k+1}}
 \end{align*}
where 
$$
\sigma_i(a)=\#\left\{ \text{occurrences of } 0w1 \text{ in $\bin{a}$}\ | \ |w|=i-1\right\}+\#\left\{ \text{occurrences of } 1w0 \text{ in $\bin{a}$}\ | \ |w|=i-1\right\}.
$$
Hence
\begin{align*}
   \text{Var}(\mu_a)
   &=\frac{n+3}{2}-\frac{1}{2^{n+1}}-\frac12 \left(\sum_{i=1}^n\frac{n-i+1}{2^{i}}-2\sum_{i=1}^n\frac{\sigma_i(a)}{2^{i}}\right)+\sum_{k=0}^{n}\frac{b_{k}+b_{n-k}}{2^{k+1}}\\
   &=\frac{n+3}{2}-\frac{1}{2^{n+1}}-\frac12 \left((n+1)(1-\frac{1}{2^n})+\sum_{i=1}^n\frac{-i}{2^{i}}-2\sum_{i=1}^n\frac{\sigma_i(a)}{2^{i}}\right)+\sum_{k=0}^{n}\frac{b_{k}+b_{n-k}}{2^{k+1}}\\
   &=1+\frac{n}{2^{n+1}}+\frac12\sum_{i=1}^n\frac{i}{2^{i}}+\sum_{i=1}^n\frac{\sigma_i(a)}{2^{i}}+\sum_{k=0}^{n}\frac{b_{k}+b_{n-k}}{2^{k+1}}
\end{align*}

Now, notice that for any $\left( b_n \right)_{n\in \N} \in \{-1,1\}^\N$:
$$
\lim_{n\rightarrow+\infty}\frac{1}{n}\left(1+\frac{n}{2^{n+1}}+\frac12\sum_{i=1}^n\frac{i}{2^{i}}+\sum_{k=0}^{n}\frac{b_{k}+b_{n-k}}{2^{k+1}}\right)=0,
$$
and that, having $\left( X_n \right)_{n\in \N}$ a sequence of independant variables indentically distributed with the balance Bernoulli measure $\mathbb{P}$, the law of large number yields:
$$
\forall i \in \N\backslash \{0\}, \ \exists \ \mathcal{U}_i\subset \{0,1\}^\N,\ \left\{    			\begin{array}{l}
      \mathbb{P}(\mathcal{U}_i)=1\\
      \text{ and }\\
      \forall X \in \mathcal{U}_i, \lim_{n \rightarrow +\infty} \frac{\sigma_i(a_X(n))}{n}=\frac12                                                                                      \end{array}\right.
$$
Define 
$$
\mathcal{U}=\bigcap_{i \in \N\backslash\{0\}}\mathcal{U}_i.
$$
Let us now prove that for every $X \in \mathcal{U}$,
$$
\lim_{n\rightarrow +\infty}\frac{1}{n}\sum_{i=1}^n\frac{\sigma_i(a_X(n))}{2^{i}}=\frac12.
$$
It is easy to see that this limit exists (for any $i \leq n$, $\sigma_i(a_X(n))\leq n$) so let us denote it by $l$. Let us write the following equality for any $n \in \N$ and any $N < n$:
$$
\frac{1}{n}\sum_{i=1}^n\frac{\sigma_i(a_X(n))}{2^{i}}=\frac{1}{n}\sum_{i=1}^N\frac{\sigma_i(a_X(n))}{2^{i}}+\frac{1}{n}\sum_{i=N+1}^n\frac{\sigma_i(a_X(n))}{2^{i}}
$$
and so, for any $n \in \N$ and any $N < n$:

\begin{align*}
   \frac{1}{n}\sum_{i=1}^N\frac{\sigma_i(a_X(n))}{2^{i}}&\leq
 \frac{1}{n}\sum_{i=1}^n\frac{\sigma_i(a_X(n))}{2^{i}} \leq
 \frac{1}{n}\sum_{i=1}^N\frac{\sigma_i(a_X(n))}{2^{i}} + 
 \frac{1}{n}\sum_{i=N+1}^n\frac{\sigma_i(a_X(n))}{2^{i}}\\
 \frac{1}{n}\sum_{i=1}^N\frac{\sigma_i(a_X(n))}{2^{i}}&\leq
 \frac{1}{n}\sum_{i=1}^n\frac{\sigma_i(a_X(n))}{2^{i}} \leq
 \frac{1}{n}\sum_{i=1}^N\frac{\sigma_i(a_X(n))}{2^{i}} +
 \sum_{i=N+1}^n\frac{1}{2^{i}}\\
  \frac{1}{n}\sum_{i=1}^N\frac{\sigma_i(a_X(n))}{2^{i}}&\leq
 \frac{1}{n}\sum_{i=1}^n\frac{\sigma_i(a_X(n))}{2^{i}}\leq
 \frac{1}{n}\sum_{i=1}^N\frac{\sigma_i(a_X(n))}{2^{i}} +
 \frac{1}{2^{N}}
\end{align*}
and so, taking the limit as $n \rightarrow +\infty$ yields:
$$
 \frac12\sum_{i=1}^N\frac{1}{2^{i}}\leq
 l \leq
 \frac{1}{2}\sum_{i=1}^N\frac{1}{2^{i}} + 
 \frac{1}{2^N}.
 $$
 This being true for all $N$, we have that:
 $$
 \forall X \in \mathcal{U},\ \lim_{n\rightarrow +\infty} \frac{\text{Var}(\mu_{a_X(n)})}{n}=\frac12.
 $$
 
 \begin{remark}
   One can also notice,  even if it is not the goal of our paper, that by doing exactly the same proof for a non balanced Bernoulli measure $(p,1-p)$, one can prove that there exists a set $\widetilde{\mathcal{U}} \subset \{0,1\}^N$ of full measure such that:
  $$
 \forall X \in \widetilde{\mathcal{U}},\ \lim_{n\rightarrow +\infty} \frac{\text{Var}(\mu_{a_X(n)})}{n}=2p(1-p).
 $$
 \end{remark}

\subsection{Upper bounds of the moments of $\mu_{a_X(n)}$}\label{s.contribution}

We now want to have bound on the moments of order $l \in \N.$ Let us first remark that the only matrices appearing in the Taylor expansion of $\hA_i(\theta)$ are $I_i$, $\alpha_i$ and $\beta_i$. Indeed:
$$
\hA_i(\theta){=}\sum_{j=0}^{+\infty}\theta^jT_{i,j},
$$
where 
$$
T_{i,0}=I_i,\quad T_{i,2j}=\frac{(-1)^j}{(2j)!}\beta_i, \quad T_{i,2j+1}=\frac{(-1)^j i}{(2j+1)!}\alpha_i
$$
\begin{remark}\label{r.matrices}
Notice that the following relations hold:
 $$
I_0\begin{pmatrix}1 \\ 1 \end{pmatrix}=I_1\begin{pmatrix}1 \\ 1 \end{pmatrix}=\begin{pmatrix}1 \\ 1 \end{pmatrix},\quad
\alpha_0\begin{pmatrix}1 \\ 1 \end{pmatrix}=\alpha_1\begin{pmatrix}1 \\ 1 \end{pmatrix}=\begin{pmatrix}0 \\ 0 \end{pmatrix}
$$
and
$$
\alpha_0 I_0=\alpha_0 I_1=\frac12 \alpha_0, \quad \alpha_1 I_0=\alpha_1 I_1=\frac12 \alpha_1.
$$
Let us insist on the fact that these relations are crucial for our proof.
\end{remark}

Let us now introduce the following norm on $2\times 2$ matrices:
$$\|M\|=\max_{i\in\{1,2\}} \left( |M_{i,1}|+|M_{i,2}| \right)$$
which is induced by $\|\cdot\|_1$ on $\R^2$.

Notice that this defines a submultiplicative norm. Moreover,
$$
\|I_0\|=\|I_1\|=\|\alpha_0\|=\|\alpha_1\|=\|\beta_0\|=\|\beta_1\|=1.
$$

Our goal is to compute all the moments of the probability measure $\mu_{a_X(n)}$ in the generic case as $n$ goes to infinity. To that end, we arrange the terms appearing in the computation into different ``types''. A type is a couple $(\alpha^p,\beta^q)$ where $p,q$ are non negative integers. They indicate the number of matrices of $\alpha$ and $\beta$ appearing in the term. For instance, a term:
$$
M=I_{a_0}\cdots I_{a_{i_0-1}}\alpha_{a_{i_0}}I_{a_{i_0+1}}\cdots I_{a_{i_1-1}}\alpha_{a_{i_1}}I_{a_{i_1+1}}\cdots I_{a_{i_2-1}}\beta_{a_{i_2}}I_{a_{i_2+1}}\cdots I_{a_n}
$$
is of type $(\alpha^2,\beta^1)$.The order of appearance of the $\alpha$ and $\beta$ does not have any influence on the type, so that 
$$
I_{a_0}\cdots I_{a_{i_0-1}}\alpha_{a_{i_0}}I_{a_{i_0+1}}\cdots I_{a_{i_1-1}}\beta_{a_{i_1}}I_{a_{i_1+1}}\cdots I_{a_{i_2-1}}\alpha_{a_{i_2}}I_{a_{i_2+1}}\cdots I_{a_n}
$$
is also of type $(\alpha^2,\beta^1)$.

Let us denote by $\F p q n$ the set of all terms of type $(\alpha^p,\beta^q)$ in the expansion of $\widehat{\mu}_{a_X}(n)$.

This notation is introduced to ease the writing of the previous formulas as well as for handling terms with the same behaviour together. For instance, the formula for the variance becomes:
$$
Var\left( \mu_{a_X(n)} \right)=\begin{pmatrix}1&0\end{pmatrix}\sum_{M \in \F 2 0 n} M\begin{pmatrix}1\\1\end{pmatrix}+\begin{pmatrix}1&0\end{pmatrix}\sum_{M \in \F 0 1 n} M\begin{pmatrix}1\\1\end{pmatrix}+\begin{pmatrix}1&0\end{pmatrix}I_{a_0}\cdots I_{a_n} \begin{pmatrix}0\\2\end{pmatrix}.
$$

Now notice that for any $2\times2$ matrix $M$, 
$$
\left|\\\begin{pmatrix}1&0\end{pmatrix}M\begin{pmatrix}1\\1\end{pmatrix}\right| \leq \|M\|
$$
so in order to find an upper bound on terms of a given type, suffices to understand:
 $$
 \sum_{M \in \F p q n}\|M\|.
 $$

\begin{definition}
 We say that a type $\left( \alpha^p,\beta^q \right)$ contributes with weight at most $k$ if:
 $$
 \sum_{M \in \F p q n} \|M\|=O(n^k)
 $$
 
\end{definition}

\begin{lemma}\label{l.contributions}
 For any pair of nonnegative integers $(p,q)$, the type  $\left( \alpha^p,\beta^q \right)$ contributes with weight at most $q$.
\end{lemma}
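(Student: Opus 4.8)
The plan is to prove the estimate $\sum_{M\in\F p q n}\|M\|=O(n^q)$ by exploiting the collapsing relation $\alpha_j I_k=\tfrac12\alpha_j$ of Remark~\ref{r.matrices}, which is the only ingredient that beats the trivial bound. Indeed, submultiplicativity together with $\|I_j\|=\|\alpha_j\|=\|\beta_j\|=1$ only gives $\|M\|\le 1$ for every term, and since there are $\binom{n+1}{p}\binom{n+1-p}{q}=O(n^{p+q})$ terms this is too weak by a factor $n^p$. The idea is that each of the $p$ factors $\alpha$ absorbs the whole run of $I$'s immediately to its right at the cost of a geometric factor $2^{-(\text{run length})}$, so that the corresponding positional sums are bounded instead of linear in $n$, and only the $\beta$'s (and the prefix) still cost a factor $n$ each.

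First I would fix a term $M\in\F p q n$ and put it in block form: reading the $n+1$ positions left to right as a prefix run of $I$'s followed by $p+q$ blocks, each block being one special matrix followed by its run of $I$'s, I write
$$
M=\underbrace{I\cdots I}_{e_0}\,S_1\,\underbrace{I\cdots I}_{e_1}\,S_2\cdots S_{p+q}\,\underbrace{I\cdots I}_{e_{p+q}},
\qquad e_0+\cdots+e_{p+q}=n+1-(p+q),
$$
where each $S_i\in\{\alpha_\bullet,\beta_\bullet\}$. Applying $\alpha_j I_k=\tfrac12\alpha_j$ repeatedly collapses every run of $I$'s that follows an $\alpha$: writing $A=\{i:S_i=\alpha\}$, one gets $M=2^{-\sum_{i\in A}e_i}\,\widetilde M$ where $\widetilde M$ is a product of matrices each of norm $1$. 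Submultiplicativity then yields the per-term bound
$$
\|M\|\le 2^{-\sum_{i\in A}e_i}.
$$

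Next I would sum over all terms of the type. A term is determined by which $p$ of the $p+q$ blocks are $\alpha$-blocks (that is $\binom{p+q}{p}=O(1)$ choices) together with the run lengths $(e_1,\dots,e_{p+q})$, the prefix $e_0$ being then forced by the total-length constraint. The crucial point is that the prefix is \emph{not} a free parameter, so only $p+q$ run lengths vary, subject to $\sum_{i\ge1}e_i\le N:=n+1-(p+q)$. Separating the $p$ geometrically weighted variables (indices in $A$) from the $q$ unweighted ones (the $\beta$-runs) and relaxing the joint constraint, I bound
$$
\sum_{\substack{e_1,\dots,e_{p+q}\ge0\\ \sum_{i\ge1}e_i\le N}}2^{-\sum_{i\in A}e_i}
\ \le\
\Bigl(\prod_{i\in A}\sum_{e\ge0}2^{-e}\Bigr)\cdot\#\Bigl\{(e_i)_{S_i=\beta}:\textstyle\sum_i e_i\le N\Bigr\}
=2^{p}\binom{N+q}{q}=O(n^q).
$$
Multiplying by the $O(1)$ choices of block types and recalling $\bigl|\begin{pmatrix}1&0\end{pmatrix}M\begin{pmatrix}1\\1\end{pmatrix}\bigr|\le\|M\|$ finishes the argument.

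The main obstacle, and the place demanding care, is the bookkeeping of which $I$'s are absorbed: only the runs following an $\alpha$ collapse, whereas the prefix run and the runs following a $\beta$ survive. Treating the prefix length as an independent summation variable would cost a spurious factor of $n$ and give only the weaker $O(n^{q+1})$; the sharp exponent $q$ comes precisely from noticing that the prefix length is determined by the remaining run lengths. I would also verify that relaxing $\sum_{i\ge1}e_i\le N$ into the product of a $B$-constraint and an unconstrained $A$-sum is an honest upper bound (it enlarges the index set while all summands stay positive), and that $2^p$ and $\binom{p+q}{p}$ are genuine constants once $(p,q)$ is fixed.
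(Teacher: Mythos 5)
Your proof is correct, and it takes a genuinely different route from the paper's, even though both rest on the same two ingredients: submultiplicativity of the norm with all generators of norm $1$, and the collapsing relation $\alpha_j I_k=\tfrac12\alpha_j$ of Remark~\ref{r.matrices}. The paper proceeds by induction on $p$: a term of $\F{p+1}{q}{n}$ is split at its last $\alpha$ as $M\,\alpha_{a_{i_0}} I_{a_{i_0+1}}\cdots I_{a_{i_1-1}}\beta_{a_{i_1}}\cdots\beta_{a_{i_k}}$ with $M\in\F{p}{q-k}{i_0-1}$, the trailing block is discarded by submultiplicativity, the geometric factor $2^{-(i_1-i_0-1)}$ is summed over the gap $i_1-i_0$, the induction hypothesis handles the prefix, and one finally sums over $k\le q$. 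You instead collapse all $p$ runs of $I$'s following the $\alpha$'s at once, obtain the per-term bound $\|M\|\le 2^{-\sum_{i\in A}e_i}$, and conclude with a single global count: terms are parametrized by the block-type assignment ($\binom{p+q}{p}$ choices) and the run lengths $(e_1,\dots,e_{p+q})$ with the prefix forced, and the constrained sum factors, after a relaxation that you correctly justify, into the convergent product $2^p$ over the $\alpha$-runs times the stars-and-bars count $\binom{N+q}{q}$ over the $\beta$-runs. Your version buys a one-pass, non-inductive argument with an explicit constant $\binom{p+q}{p}2^p\binom{N+q}{q}=O(n^q)$, and it makes the mechanism transparent: each $\alpha$ costs $O(1)$ thanks to geometric decay, while each $\beta$ costs one free position, i.e.\ one factor of $n$. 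The paper's induction avoids this block bookkeeping but pays with a heavier partition over $k$ and a constant that degrades at each inductive step (its final bound being $2qCn^q$). Your two cautionary checks --- that the prefix length is determined rather than free (otherwise one only gets $O(n^{q+1})$), and that enlarging the index set of a sum of positive terms gives a valid upper bound --- are exactly the right points to verify, and both are handled correctly.
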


\begin{proof}
 We prove this lemma by induction on $p$.
 
 First notice that $\# \F 0 q n = \dbinom{n+1}{q}$. Notice also that for any $M \in \F 0 q n$, $\|M\|\leq 1$ since $\| \cdot \|$ is submultiplicative. This implies that the type  $\left( \alpha^0,\beta^q \right)$ contributes with weight $q$.
 
 Now let us assume that the type  $\left( \alpha^p,\beta^q \right)$ contributes with weight $q$ for a given $p$ and let us prove that the type $\left( \alpha^{p+1},\beta^q \right)$ has same weight.
 Now let us partition $\F {p+1} q n$. Fix $k \leq q$ and let us estimate terms that can be written $M \alpha_{a_{i_0}} I_{a_{i_0+1}}\cdots I_{a_{i_1-1}}\beta_{a_{i_1}} I_{a_{i_1+1}}\cdots I_{a_{i_2-1}}\beta_{a_{i_2}} \cdots I_{a_{i_k-1}}\beta_{a_{i_k}}$ where $M \in \F {p}{q-k}{i_0-1}$. The sum of the norms of these terms is equal to:

\begin{align*}
 &\sum_{p+q-k\leq i_0 < ... < i_k \leq n} \sum_{M \in \F {p}{q-k}{i_0-1}} \|M \alpha_{a_{i_0}} I_{a_{i_0+1}}\cdots I_{a_{i_1-1}}\beta_{a_{i_1}} \cdots \beta_{a_{i_k}}\| \\ 
 &\leq \sum_{p+q-k\leq i_0 < ... < i_k \leq n} \sum_{M \in \F {p}{q-k}{i_0-1}} \|M \alpha_{a_{i_0}} I_{a_{i_0+1}}\cdots I_{a_{i_1-1}}\| \\ 
  &\leq n^{k-1} \sum_{p+q-k\leq i_0 <  i_1 \leq n} \sum_{M \in \F {p}{q-k}{i_0-1}} \|M \alpha_{a_{i_0}} I_{a_{i_0+1}}\cdots I_{a_{i_1-1}}\| \\
  &\leq n^{k-1} \sum_{p+q-k\leq i_0 < i_1 \leq n} \|\alpha_{a_{i_0}} I_{a_{i_0+1}}\cdots I_{a_{i_1-1}}\| \sum_{M \in \F {p}{q-k}{i_0-1}} \|M\| \\
  &\underset{\text{by induction}}{\leq} n^{k-1} \sum_{p+q-k\leq i_0 \leq  i_1 \leq n} \|\alpha_{a_{i_0}} I_{a_{i_0+1}}\cdots I_{a_{i_1-1}}\| C i_0^{q-k} \\
   &\leq C n^{q-1} \sum_{p+q-k\leq i_0 <  i_1 \leq n} \|\alpha_{a_{i_0}} I_{a_{i_0+1}}\cdots I_{a_{i_1-1}}\| 
 \end{align*}
 
 and with Remark \ref{r.matrices},
 \begin{align*}
 &\leq C n^{q-1} \sum_{p+q-k\leq i_0 <  i_1 \leq n} \|\alpha_{a_{i_0}} I_{a_{i_0+1}}\cdots I_{a_{i_1-1}}\| \\
 &\leq C n^{q-1} \sum_{p+q-k\leq i_0 < i_1 \leq n} \frac{1}{2^{i_1-i_0-1}}\\
 &\leq 2Cn^{q}.
\end{align*}

This computation being valid for any value of $k$, this yields:
$$
\sum_{M \in \F {p+1} q n} \|M\|\leq 2qCn^q,
$$
which proves the lemma.
\end{proof}

\subsection{Computing all the moments}\label{s.moments}

Let us write the expansion of $\widehat \mu_a$:
$$
\widehat \mu_a(\theta)=\sum_{k=0}^{N}\frac{i^k m_k(a)}{k!}\theta^k +o(\theta^n)
$$
where
$$
m_k(a)=\sum_{d\in\Z}\mu_a(d)d^k
$$
is the moment of order $k$ of the probability measure $\mu_a$ (indeed, recall that $\mu_a$ is centered).

Now let us renormalize $\mu_{a_X(n)}$. From Proposition \ref{p.variance_generic}, we know that we have to look at $\widetilde\mu_{a_X(n)} \in l^1\left( \sqrt{\frac{2}{n}}\Z \right)$ defined by:
$$
\forall d \in \sqrt{\frac{2}{n}}\Z,\ \widetilde\mu_{a_X(n)}(d)=\mu_{a_X(n)}\left( \sqrt{\frac{n}{2}}d \right).
$$
Now notice that the characteristic function of $\widetilde\mu_{a_X(n)}$ in $\theta$ is actually:
$$
\widehat\mu_{a_X(n)}\left(\sqrt{\frac{2}{n}}\theta\right){=}\sum_{k=0}^{N}\frac{(i\sqrt2)^k m_k(a_X(n))}{n^\frac{k}{2}k!}\theta^k +o(\theta^N).
$$
Hence, for any $n \in \N$, the moments of order $k$ of the probality measure $\widetilde\mu_{a_X(n)}$, denoted by $\widetilde m_k(a_X(n))$, are:
$$
\widetilde m_k(a_X(n))=\frac{\sqrt{2^k} m_k(a_X(n))}{n^\frac{k}{2}}
$$
and thus, we wish to understand, if it exists, for any integer $k$, 
$$
\lim_{n\rightarrow+\infty}\frac{\sqrt{2^k} m_k(a_X(n))}{n^\frac{k}{2}}.
$$

\begin{lemma}\label{l.moments_limits}
 For almost every sequence $X \in \{0,1\}^\N$, for any $k \in \N$:
 $$
  \lim_{n\rightarrow+\infty}\widetilde m_{2k}(a_X(n))=\frac{(2k)!}{2^k k!}
 $$
 and
 $$
 \lim_{n\rightarrow+\infty}\widetilde m_{2k+1}(a_X(n))=0
 $$
 which are the moments of the normal law $\mathcal{N}(0,1)$.
\end{lemma}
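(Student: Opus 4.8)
The plan is to compute the limit of the renormalized moments $\widetilde m_k(a_X(n)) = \sqrt{2^k}\, m_k(a_X(n)) / n^{k/2}$ by identifying exactly which ``types'' in the Taylor expansion of $\widehat\mu_{a_X(n)}$ survive renormalization. Recall that the moment $m_k(a)$ is (up to the factor $i^k/k!$) the coefficient of $\theta^k$ in the expansion, and by the multinomial structure of $\widehat\mu_a(\theta) = \begin{pmatrix}1&0\end{pmatrix}\hA_{a_0}\cdots\hA_{a_n}\begin{pmatrix}1\\ \frac{e^{i\theta}}{2-e^{-i\theta}}\end{pmatrix}$, this coefficient is a sum over all ways of picking, at each position $j$, one of the terms $T_{a_j,r}$, with the chosen exponents $r$ summing to (roughly) $k$. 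Each such product falls into a type $(\alpha^p,\beta^q)$, and a product contributing to the $\theta^k$ coefficient consumes $p$ odd-order factors and $q$ even-order factors with $p + 2q$ close to $k$; the precise bookkeeping also involves the order-$2$ expansion $\frac{e^{i\theta}}{2-e^{-i\theta}} = 1 - \theta^2 + O(\theta^3)$ of the right vector.

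The key step is the dimensional count. By Lemma~\ref{l.contributions}, the type $(\alpha^p,\beta^q)$ contributes with weight at most $q$, i.e.\ $\sum_{M\in\F p q n}\|M\| = O(n^q)$. A type contributing to the coefficient of $\theta^k$ satisfies $p + 2q = k$ (accounting for one $\alpha$-factor carrying one power of $\theta$ and one $\beta$-factor carrying two). Since $m_k$ is divided by $n^{k/2}$, a type of weight $q$ contributes $O(n^{q - k/2}) = O(n^{q - (p+2q)/2}) = O(n^{-p/2})$. Hence every type with $p \geq 1$ is killed in the limit, and only the pure-$\beta$ types $(\alpha^0,\beta^q)$ with $2q = k$ can survive. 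This immediately forces the odd moments to $0$ (no integer $q$ with $2q = 2k+1$, so every surviving type must carry at least one $\alpha$, giving weight strictly below $k/2$), and reduces the even-moment computation to the single type $(\alpha^0,\beta^{k})$ for $\widetilde m_{2k}$.

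It then remains to evaluate $\lim_{n\to\infty} n^{-k}\,\begin{pmatrix}1&0\end{pmatrix}\sum_{M\in\F 0 k n} M\begin{pmatrix}1\\1\end{pmatrix}$ (together with the correction from the $-\theta^2$ term of the right vector, which by the same weight count contributes a type of strictly smaller order and is negligible). Using $I_j\begin{pmatrix}1\\1\end{pmatrix}=\begin{pmatrix}1\\1\end{pmatrix}$ from Remark~\ref{r.matrices}, one collapses the trailing $I$-factors so that $\begin{pmatrix}1&0\end{pmatrix} M \begin{pmatrix}1\\1\end{pmatrix}$ depends only on the positions of the $q = k$ blocks of $\beta$'s interspersed among $I$'s; the dominant contribution comes from configurations where the $\beta$'s are widely separated, so each contributes its leading eigenvalue behaviour and the geometric $2^{-\text{gap}}$ decay of $\|\alpha_{a_{i_0}} I\cdots I\|$ (from Remark~\ref{r.matrices}) becomes irrelevant in the pure-$\beta$ case. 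The leading term reduces to choosing $k$ positions out of $n+1$, giving $\binom{n+1}{k}\sim n^k/k!$, each weighted by the value $\begin{pmatrix}1&0\end{pmatrix}\beta_{a_j}\begin{pmatrix}1\\1\end{pmatrix}$-type contribution, and after inserting the combinatorial factor $i^k/k!$ and the renormalization $\sqrt{2^{2k}}/n^{k}$ one recovers $\frac{(2k)!}{2^k k!}$.

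The main obstacle will be the last step: rather than bounding norms as in Lemma~\ref{l.contributions}, one must compute the surviving coefficient \emph{exactly} to its leading order in $n$. This requires tracking the actual values $\begin{pmatrix}1&0\end{pmatrix}I_{a_0}\cdots \beta_{a_{i_1}}\cdots\beta_{a_{i_k}}\cdots I_{a_n}\begin{pmatrix}1\\1\end{pmatrix}$ rather than their absolute values, which forces a careful use of the trigonalization $\widetilde I_j, \widetilde\beta_j$ from the proof of Theorem~\ref{thm.variance} and of the correlation estimates of Lemma~\ref{l.correlation} to show that the $a_j$-dependent (i.e.\ $b_j$-dependent) cross terms average out and only the constant leading part of each $\widetilde\beta$ block survives. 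The delicate point is verifying that the many near-diagonal configurations (where some $\beta$-blocks are close together) contribute only lower-order $O(n^{k-1+\varepsilon})$ terms, so that the clean $\binom{n+1}{k}$ count governs the limit; this is exactly where the ``widely separated blocks dominate'' heuristic must be made rigorous, presumably by the same Borel--Cantelli / correlation-bound machinery used for the variance.
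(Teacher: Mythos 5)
Your reduction coincides with the paper's and is sound as far as it goes: by Lemma \ref{l.contributions}, a type $(\alpha^p,\beta^q)$ occurring in the coefficient of $\theta^k$ has $p+2q\leq k$ and contributes $O(n^q)$, hence $O(n^{-p/2})$ after dividing by $n^{k/2}$, so only pure-$\beta$ types can survive the renormalization. This disposes of the odd moments (no pure-$\beta$ type exists in odd degree) and reduces $\widetilde m_{2k}$ to the single type $(\alpha^0,\beta^k)$, the terms coming from the expansion of the right vector $\frac{e^{i\theta}}{2-e^{-i\theta}}$ being negligible for the same reason. Up to this point your argument is complete and is exactly the paper's.

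The genuine gap is the step you yourself set aside as ``the main obstacle'', namely the identity
$$
\begin{pmatrix}1&0\end{pmatrix}\sum_{M\in\F{0}{k}{n}}M\begin{pmatrix}1\\1\end{pmatrix}=\frac{n^k}{2^k k!}+o(n^k),
$$
which is the substantive content of the lemma; moreover, the heuristic you propose for it would not survive being made rigorous. In the triangularized basis of Theorem \ref{thm.variance}, the term with $\beta$'s at positions $d=(d_1,\ldots,d_k)\in\D{k}{n}$ is $\Pi_d=\begin{pmatrix}2^{-k}+A_d&0\\B_d&0\end{pmatrix}$, whose top-left entry equals $\prod_{r=1}^k\left(\frac12-b_{d_r}\sum_{i=d_{r-1}+1}^{d_r-1}\frac{b_i}{2^{d_r-i}}\right)$ (with the convention $d_0=-1$). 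Thus \emph{every} configuration contributes $2^{-k}$ at leading order, and the error factors are not controlled by the gaps between the $\beta$'s: the factor attached to $d_r$ is a geometrically weighted correlation sum over the whole gap preceding $d_r$, typically of order $1$ for a random sequence, and it vanishes exactly when the blocks are adjacent ($d_r=d_{r-1}+1$). So, if anything, widely separated configurations carry the \emph{largest} errors, and the ``separation'' argument you sketch cannot yield the required $o(n^k)$ bound. What the paper actually does is show that the errors cancel in aggregate rather than individually: it proves by induction on $k$ that the corrections are negligible on average over $d$ (i.e.\ $n^{-k}\sum_{d\in\D{k}{n}}A_d\rightarrow0$ and $n^{-k}\sum_{d\in\D{k}{n}}B_d\rightarrow0$ almost surely), the inductive step bounding the new cross terms --- each a product of $2^{-k}$ or $A_{d'}$ with one fresh correlation factor $b_l\sum_i b_i2^{-(l-i)}$ --- by $O(n^{k+\frac12+\varepsilon})$ using the almost sure estimate $C_{2,n}<n^{\frac12+\varepsilon}$ of Lemma \ref{l.correlation}. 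You do name the right ingredients (the trigonalization and Lemma \ref{l.correlation}), but the induction that combines them, which is the heart of the proof, is absent; as it stands your proposal establishes only the odd-moment half of the statement.
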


\begin{proof}

Remark that in this proof, we only consider terms of the following type:
$$
\begin{pmatrix}1&0\end{pmatrix}\sum_{M \in \F p q n } M \begin{pmatrix}1\\1\end{pmatrix}
$$
since taking a term of degree greater than 0 in the Taylor expansion of $\frac{e^{i\theta}}{2-e^{-i\theta}}$ will not contribute because it involves terms of smaller types on $\beta$.
 
Let us remark right away that for a moment of order $2k+1$, the type of terms which could contribute the most is $\left( \alpha^1, \beta^k \right)$. With Lemma \ref{l.contributions}, this type has weight at most $k$. Moreover, there is always a finite number of types contributing to a moment. Hence  $m_{2k+1}(a_X(n))=O(n^k)$ and thus:
$$
\lim_{n\rightarrow+\infty}\frac{\sqrt{2^{2k+1}} m_{2k+1}(a_X(n))}{n^\frac{2k+1}{2}}=0.
$$
or, equivalently,
 $$
 \lim_{n\rightarrow+\infty}\widetilde m_{2k+1}(a_X(n))=0.
 $$

Next, we consider the even moments.

For a moment $m_{2k}$, from Lemma \ref{l.contributions}, the only type potentially contributing to the limit is $\left( \alpha^0, \beta^k \right)$. More precisely, in order to get that:
 $$
  \lim_{n\rightarrow+\infty}\widetilde m_{2k}(a_X(n))=\frac{(2k)!}{2^k k!}
 $$
 one must show the following identity on limits (and prove that they exist):
$$
\lim_{n\rightarrow+\infty}\frac{(i\sqrt2)^{2k} m_{2k}(a_X(n))}{n^k (2k)!}=\lim_{n\rightarrow+\infty}\left(\frac{2}{n}\right)^k\left( \frac{-1}{2} \right)^k\begin{pmatrix}1&0\end{pmatrix}\sum_{M \in \F 0 k n } M \begin{pmatrix}1\\1\end{pmatrix},
$$
since the Taylor expansion of $\hA_j$ near 0 is:
$$
\hA_j(\theta)=I_j + i\theta \alpha_j-\frac12 \theta^2 \beta_j + O(\theta^3).
$$

This equality is equivalent to:
$$
\lim_{n\rightarrow+\infty}\frac{2^{k} m_{2k}(a_X(n))}{n^k }=\lim_{n\rightarrow+\infty}\frac{(2k)!}{n^k}\ \begin{pmatrix}1&0\end{pmatrix}\sum_{M \in \F 0 k n } M \begin{pmatrix}1\\1\end{pmatrix}.
$$
In short, we must show that:
$$
\begin{pmatrix}1&0\end{pmatrix}\sum_{M \in \F 0 k n } M \begin{pmatrix}1\\1\end{pmatrix}=\frac{n^k}{2^k k!}+o(n^k),
$$
so that
$$
\lim_{n\rightarrow+\infty}\frac{2^{k} m_{2k}(a_X(n))}{n^k}=\frac{(2k)!}{2^k k!}
$$
which is the moment of order $2k$ of the normal law $\mathcal{N}\left( 0,1 \right)$.

Let $\mathcal{D}_k(n)=\{(d_1,...,d_k)\ | \ 0\leq d_1<...<d_k\leq n\}$. 
For $d \in \D k n$, denote $\Pi_{d}=\widetilde I_{a_0}\cdots\widetilde I_{a_{d_1-1}}\widetilde \beta_{a_{d_1}}\cdots\widetilde I_{a_{d_k-1}}\widetilde \beta_{a_{d_k}}$ (this is just a matrix $M \in \F 0 k n $ after the change of basis described in the proof of Theorem \ref{thm.variance}).
Let us prove by induction on $k$ that:

$$
\Pi_{d}=\begin{pmatrix}
       \frac{1}{2^k}+A_{d} & 0 \\
       B_{d} & 0
      \end{pmatrix}
$$
with $A_d$ and $B_d$ satisfying:
$$
\lim_{n\rightarrow +\infty}\frac{1}{n^k} \sum_{d \in \D k n} \left|A_{d}\right| = 0.
$$
and
$$
\lim_{n\rightarrow +\infty}\frac{1}{n^k} \sum_{d \in \D k n} \left|B_{d}\right| = 0.
$$

The case of $k=1$ is treated in Lemma \ref{l.correlation}. Let us assume this is true up to an integer $k$.

Let $d \in \D {k+1} {n}$. For clarity in the fomulas, let us write $d=(d_1...,d_{k-1},j,l)$ and $d'=(d_1,...,d_{k-1},j) \in \D k n.$
$$
\Pi_{d}=\Pi_{d'}\widetilde I_{a_{j+1}}\cdots\widetilde I_{a_{l-1}}\widetilde \beta_{a_{l}}
$$
Now compute:
$$
\widetilde I_{a_{j+1}}\cdots\widetilde I_{a_{l-1}}\widetilde \beta_{a_{l}}=\begin{pmatrix}
		    \frac12 - b_{l}\displaystyle \sum_{i=j+1}^{l-1} \frac{b_{i}}{2^{l-i}}&0\\
		    -\frac{b_{l}}{2^{l-j}}&0
		    \end{pmatrix}
$$
and by induction hypothesis, 
$$
\Pi_{d'}=\begin{pmatrix}
       \frac{1}{2^k}+A_{d'} & 0 \\
       B_{d'} & 0
      \end{pmatrix}
$$
Thus
$$
\Pi_{d}=\begin{pmatrix}
		    \frac{1}{2^{k+1}} -
		    \frac{1}{2^{k}}b_{l}\displaystyle \sum_{i=j+1}^{l-1} \frac{b_{i}}{2^{l-i}} +
		    \frac{A_{d'}}{2}-
		    A_{d' }b_{l}\displaystyle \sum_{i=j+1}^{l-1} \frac{b_{i}}{2^{l-i}}&0\\
		    \frac12 B_{d'}-
		    B_{d'}b_{l}\displaystyle \sum_{i=j+1}^{l-1} \frac{b_{i}}{2^{l-i}}&0
		    \end{pmatrix}
$$
and we have to prove the following:

\textit{Claim:}
$$
\frac{1}{n^{k+1}}\sum_{d \in \D {k+1} n}\left|-
		    \frac{1}{2^{k}}b_{l}\displaystyle \sum_{i=j+1}^{l-1} \frac{b_{i}}{2^{l-i}} +
		    \frac{A_{d'}}{2}-
		    A_{d' }b_{l}\displaystyle \sum_{i=j+1}^{l-1} \frac{b_{i}}{2^{l-i}}\right|\underset{n\rightarrow \infty}{\longrightarrow}0.
$$

\textit{Proof of the claim:}

\begin{itemize}
\item First,we have:
\begin{align*}
 \sum_{d\in \D {k+1} n}\left| \frac{1}{2^{k}}b_{l}\displaystyle \sum_{i=j+1}^{l-1} \frac{b_{i}}{2^{l-i}}\right|
 &= \sum_{j=k}^{n-k} \sum_{0\leq d_1<...< d_{k-1}<j}\sum_{l=j+1}^{n}\left| \frac{1}{2^{k}}b_{l}\displaystyle \sum_{i=j+1}^{l-1} \frac{b_{i}}{2^{l-i}}\right|\\
 &\leq n^{k-1}\sum_{j=k}^{n-k} \sum_{l=j+1}^{n}\left| \frac{1}{2^{k}}b_{l}\displaystyle \sum_{i=j+1}^{l-1} \frac{b_{i}}{2^{l-i}}\right|
 \end{align*}
 and notice that $\displaystyle\sum_{l=j+1}^{n}\left| b_{l}\displaystyle \sum_{i=j+1}^{l-1} \frac{b_{i}}{2^{l-i}}\right|\leq C_{2,n}$ so, for $n$ big enough, according to Lemma \ref{l.correlation}:
 \begin{align*}
 &n^{k-1}\sum_{j=k}^{n-k} \sum_{l=j+1}^{n}\left| \frac{1}{2^{k}}b_{l}\displaystyle \sum_{i=j+1}^{l-1} \frac{b_{i}}{2^{l-i}}\right|\\
 &{\leq} \frac{n^{k-1}}{2^k}\sum_{j=k}^{n-k} n^{\frac12 + \varepsilon}\\
 .&\leq \frac{n^{k+\frac12 + \varepsilon}}{2^{k}} 
\end{align*}
 \item Also,
 $$
\sum_{d\in \D {k+1} n}\ \left|\frac{A_{d'}}{2}\right|=\sum_{d'\in \D k n}\sum_{l=k}^{n} \left|\frac{A_{d'}}{2}\right|\leq 
n\sum_{d'\in \D k n}\left|\frac{A_{d'}}{2}\right|
$$
and, by induction,
$$
\lim_{n\rightarrow +\infty}\frac{1}{n^k}\sum_{d'\in \D {k} n}\left|\frac{A_{d'}}{2}\right|=0
$$
hence
$$
\lim_{n\rightarrow +\infty}\frac{1}{n^{k+1}}\sum_{d\in \D {k+1} n}\ \left|\frac{A_{d'}}{2}\right|=0.
$$

\item Finally:
\begin{align*}
 \sum_{d\in \D {k+1} n}\left|A_{d' }b_{l}\displaystyle \sum_{i=j+1}^{l-1} \frac{b_{i}}{2^{l-i}}\right|
 &= \sum_{j=k}^{n-k} \sum_{0\leq d_1<...< d_{k-1}<j}\sum_{l=j+1}^{n}\left| A_{d'}b_{l}\displaystyle \sum_{i=j+1}^{l-1} \frac{b_{i}}{2^{l-i}}\right|\\
 &\leq \sum_{j=k}^{n-k} \sum_{0\leq d_1<...< d_{k-1}<j}\left|A_{d'}\right|\sum_{l=j+1}^{n}\left| b_{l}\displaystyle \sum_{i=j+1}^{l-1} \frac{b_{i}}{2^{l-i}}\right|
 \end{align*}
 and, as in the study of the first term, using Lemma \ref{l.correlation}, for $n$ big enough:
 \begin{align*} 
  & \sum_{j=k}^{n-k} \sum_{0\leq d_1<...< d_{k-1}<j}\left|A_{d'}\right|\sum_{l=j+1}^{n}\left| b_{l}\displaystyle \sum_{i=j+1}^{l-1} \frac{b_{i}}{2^{l-i}}\right|\\
 &\leq \sum_{j=k}^{n-k} \sum_{0\leq d_1<...< d_{k-1}<j}\left|A_{d'}\right|n^{\frac12+\varepsilon}\\
  &\leq n^{\frac12+\varepsilon} \sum_{d'\in \D k n}\left|A_{d'}\right|\\
 &\leq n^{k+\frac12 + \varepsilon} 
\end{align*}
by induction hypothesis.
\end{itemize}

In the end,
$$
\lim_{n\rightarrow +\infty}\frac{1}{n^{k+1}}\sum_{d\in \D {k+1} n} \left| \frac{1}{2^{k}}b_{l}\displaystyle \sum_{i=j+1}^{l-1} \frac{b_{i}}{2^{l-i}} +
		    \frac{A_{d'}}{2}-
		    A_{d' }b_{l}\displaystyle \sum_{i=j+1}^{l-1} \frac{b_{i}}{2^{l-i}}\right|=0.
$$
And the same goes for proving that:
$$
\frac{1}{n^{k+1}}\sum_{d \in \D {k+1} n}\left|\frac12 B_{d'}-B_{d'}b_{l}\displaystyle \sum_{i=j+1}^{l-1} \frac{b_{i}}{2^{l-i}}\right|\underset{n\rightarrow \infty}{\longrightarrow}0.
$$

Hence,
\begin{align*}
 \begin{pmatrix}1&0\end{pmatrix}\sum_{M \in \F 0 k n } M \begin{pmatrix}1\\1\end{pmatrix}
&= \begin{pmatrix}\frac12&-\frac12\end{pmatrix}\sum_{d \in \D k n } \Pi_d \begin{pmatrix}2\\0\end{pmatrix}\\
&=\sum_{d \in \D k n} \frac{1}{2^k} + A_d-\sum_{d \in \D k n} B_d\\
&=\sum_{d \in \D k n} \frac{1}{2^k} + \sum_{d \in \D k n}A_d-\sum_{d \in \D k n} B_d\\
&=\dbinom{n}{k}\frac{1}{2^k} + \sum_{d \in \D k n}A_d-\sum_{d \in \D k n} B_d\\
&=\frac{n!}{k!(n-k)!2^k}+ \sum_{d \in \D k n}A_d-\sum_{d \in \D k n} B_d
\end{align*}
and since
$$
\lim_{n\rightarrow +\infty}\frac{1}{n^k} \sum_{d \in \D k n} \left|A_{d}\right| = 0
$$
and
$$
\lim_{n\rightarrow +\infty}\frac{1}{n^k} \sum_{d \in \D k n} \left|B_{d}\right| = 0,
$$
we have that:
$$
\begin{pmatrix}1&0\end{pmatrix}\sum_{M \in \F 0 k n } M \begin{pmatrix}1\\1\end{pmatrix}\underset{n\rightarrow+\infty}{=}\frac{n^k}{2^k k!}+o(n^k),
$$
which yields:
$$
\lim_{n\rightarrow+\infty}\frac{2^{k} m_{2k}(a_X(n))}{n^k}=\frac{(2k)!}{2^k k!}.
$$
\end{proof}

Hence the moments of the probability measure $\widetilde\mu_{a_X(n)}$ converge towards the moments of the normal law $\mathcal{N}(0,1)$, which prove the Central Limit Theorem \ref{t.tcl} by \cite{markov}.


\end{document}